%Format: latex
\documentclass[10pt, reqno]{amsart}
%\usepackage{datetime}

%%%%%%%%%%%%%%%%%%%%%%% PACKAGES  %%%%%%%%%%%%%%%%%%%%%%%%%%%%%%%%
\usepackage{epsfig,latexsym,amsfonts,amssymb,amsmath,amscd,graphics,epic}
\usepackage{amsfonts,amssymb,amsmath,amscd,amsthm}
\usepackage[mathscr]{eucal}
\usepackage{mathrsfs}

\usepackage{mathbbol}

\usepackage{oldgerm,units}
\usepackage{wrapfig,epsfig}

\usepackage{ifthen}
\usepackage{mathbbol}

\usepackage{amsthm}
\usepackage[mathscr]{eucal}
\usepackage{mathrsfs}
\usepackage{mathbbol}
\usepackage{oldgerm,units}
\usepackage{wrapfig}

%\input ../macro/diagram.tex

%%%%%%%%%%%%%%%%%%%%%%% PAGE SETUP %%%%%%%%%%%%%%%%%%%%%%%%%%%%%%%%

%==>\setlength{\oddsidemargin}{0.3in}
%%\setlength{\evensidemargin}{0in}
%==>\setlength{\topmargin}{0in}
%%\setlength{\headheight}{0in}
%%\setlength{\headsep}{0in}
%%\setlength{\footskip}{2pc}
%===>\setlength{\textwidth}{5.8in}
%%
% ==> \setlength{\textheight}{8.35in}
%%
%%\def\topfraction{0.99}
%%\def\textfraction{0.05}
%%\def\floatpagefraction{0.9}
%%\def\bottomfraction{0.99}
%%\def\dbltopfraction{0.99}
%%\def\dblfloatpagefraction{0.8}
%%
%%
%%\setlength{\floatsep}{1pc}
%%\setlength{\textfloatsep}{1pc}
%%\setlength{\dblfloatsep}{\floatsep}
%%\setlength{\dbltextfloatsep}{\textfloatsep}

%%%%%%%%%%%%%%%%%%%%%%% THEOREMS %%%%%%%%%%%%%%%%%%%%%%%%%%%%%%%%

\newtheorem{theorem}{Theorem}[section]
\newtheorem{proposition}[theorem]{Proposition}
\newtheorem{definition}[theorem]{Definition}
\newtheorem{kdefinition}[theorem]{(Key) Definition}

\newtheorem{lemma}[theorem]{Lemma}

\newtheorem{notation}[theorem]{Notation}
\newtheorem{corollary}[theorem]{Corollary}

\newtheorem{example}[theorem]{Example}
\newtheorem{remark}[theorem]{Remark}

\newtheorem*{theorem*} {Theorem}
\newtheorem*{corollary*} {Corollary}
\newtheorem*{kquestion*} {Known question}
\newtheorem*{question*} {Question}
\newtheorem*{remark*}{Remark}
\newtheorem*{example*}{Example}
\newtheorem*{notation*}{Notation}
\newtheorem{note}[theorem]{Note}

%%%%%%%%%%%%%%%%%%%%%%% REFERENCE %%%%%%%%%%%%%%%%%%%%%%%%%%%%%%%%

%%%%%%%%%%%%%%%%%%%%%%% SETS %%%%%%%%%%%%%%%%%%%%%%%%%%%%%%%%

\newcommand{\Real}{\mathbb R}

%%%%%%%%%%%%%%%%%%%%%%% GRUOPES %%%%%%%%%%%%%%%%%%%%%%%%%%%%%%%%

\newcommand{\one}{\mathbb{1}}
\newcommand{\zero}{\mathbb{0}}

%%%%%%%%%%%%%%%%%%%%%%% ALGEBRA %%%%%%%%%%%%%%%%%%%%%%%%%%%%%%%%

%%%%%%%%%%%%%%%%%%%%%%% SUB SETS %%%%%%%%%%%%%%%%%%%%%%%%%%%%%%%%

%%%%%%%%%%%%%%%%%%%%%%% COMBINATORICS %%%%%%%%%%%%%%%%%%%%%%%%%%%%%%%%
\newcommand{\chos}[2]{{#1 \choose #2}}

%%%%%%%%%%%%%%%%%%%%%%% TROP SETS %%%%%%%%%%%%%%%%%%%%%%%%%%%%%%%%

%%%%%%%%%%%%%%%%%%%%%%% GENERLIZED TROP %%%%%%%%%%%%%%%%%%%%%%%%%%%%%%%%

%\newcommand{\iReal}[1]{^{#1} \Real}

%%%%%%%%%%%%%%%%%%%%%%% GOTIC LETTERS %%%%%%%%%%%%%%%%%%%%%%%%%%%%%%%%

%%%%%%%%%%%%%%%%%%%%%%% TROP LETTERS %%%%%%%%%%%%%%%%%%%%%%%%%%%%%%%%
\newcommand{\trop}[1]{\mathcal{#1}}

\newcommand{\tB}{\trop{B}}
\newcommand{\tC}{\trop{C}}

\newcommand{\tG}{\trop{G}}
\newcommand{\tH}{\trop{H}}

\newcommand{\tQ}{\trop{Q}}

\newcommand{\tW}{\trop{W}}

%%%%%%%%%%%%%%%%%%%%%%% SIGNS %%%%%%%%%%%%%%%%%%%%%%%%%%%%%%%%

%\newcommand{\to}{\rightarrow }

%\newcommand{\iff}{\thSpc \Longleftrightarrow \thSpc}

%%%%%%%%%%%%%%%%%%%%%%% TROP SETS %%%%%%%%%%%%%%%%%%%%%%%%%%%%%%%%

%%%%%%%%%%%%%%%%%%%%%%% TROP RELATION %%%%%%%%%%%%%%%%%%%%%%%%%%%%%%

%%%%%%%%%%%%%%%%%%%%%%% TROP SIGNS %%%%%%%%%%%%%%%%%%%%%%%%%%%%%%%%

% {#1^{\bigtriangledown}}
% {#1^{\bigtriangledown}}

%%%%%%%%%%%%%%%%%%%%%%% TROP DUALITY %%%%%%%%%%%%%%%%%%%%%%%%%%%%%%%%

%%%%%%%%%%%%%%%%%%%%%%% GREEK LETTERS %%%%%%%%%%%%%%%%%%%%%%%%%%%%%%%%

\newcommand{\al}{\alpha}

%\newcommand{\dl}{\delta}

%%%%%%%%%%%%%%%%%%%%%%% PERMUTATION %%%%%%%%%%%%%%%%%%%%%%%%%%%%%%%%

%%%%%%%%%%%%%%%%%%%%%%% TROP MATRICES %%%%%%%%%%%%%%%%%%%%%%%%%%%%%%%%

%\newcommand{\ptA}{\prmA{\tA}}

%%%%%%%%%%%%%%%%%%%%%%% NOTATIONS %%%%%%%%%%%%%%%%%%%%%%%%%%%%%%%%

%%%%%%%%%%%%%%%%%%%%%%% NOTATIONS %%%%%%%%%%%%%%%%%%%%%%%%%%%%%%%%

%\newcommand{\rnk}{\operatorname{rk}}

%\newcommand{\tIdA}{\Id'_{_\tT}}
%\newcommand{\tIdB}{\Id''_{_\tT}}

%\newcommand{\tIdA}{\al}
%\newcommand{\tIdB}{\bt}

%\newcommand{\ptId}{p_{_\tT}}

%%%%%%%%%%%%%%%%%%%%%%% TROPICAL OPERATORS %%%%%%%%%%%%%%%%%%%%%%%%%%%%%%%%

%\newcommand{\Dim}{Dim \:}

%%%%%%%%%%%%%%%%%%%%%%% SHORTINGS %%%%%%%%%%%%%%%%%%%%%%%%%%%%%%%%

%%%%%%%%%%%%%%%%%%%%%%% LETTER SHORT %%%%%%%%%%%%%%%%%%%%%%%%%%%%%%%%

%%%%%%%%%%%%%%%%%%%%%%% GRAPH THEORY %%%%%%%%%%%%%%%%%%%%%%%%%%%%%%%%

%%%%%%%%%%%%%%%%%%%%%%% POLYTOPES %%%%%%%%%%%%%%%%%%%%%%%%%%%%%%%%
%%%%%%%%%%%%%%%%%%%%%%% POLYTOPES %%%%%%%%%%%%%%%%%%%%%%%%%%%%%%%%

\newcommand{\Cl}[1]{#1^\bullet}

%%%%%%%%%%%%%%%%%%%%%%% POLYTOPES %%%%%%%%%%%%%%%%%%%%%%%%%%%%%%%%
%%%%%%%%%%%%%%%%%%%%%%% POLYTOPES %%%%%%%%%%%%%%%%%%%%%%%%%%%%%%%%

\hfuzz5pt % Don't bother to report over-full boxes < 5pt
\vfuzz5pt % Don't bother to report over-full boxes < 5pt

\pagestyle{empty}
    \ifx\proof\undefined
    \newenvironment{proof}{
    \smallskip
    \noindent\emph{Proof.}}{\hfill\(\Box\)
    \bigskip
    } \fi

%%%%%%%%%%%%%%%%%%%%%%% MATRIXES %%%%%%%%%%%%%%%%%%%%%%%%%%%%%%%%

%%%%%%%%%%%%%%%%%%%%%%% GREEN %%%%%%%%%%%%%%%%%%%%%%%%%%%%%%%%

%%%%%%%%%%%%%%%%%%%%%%% VECTORS %%%%%%%%%%%%%%%%%%%%%%%%%%%%%%%%

%%%%%%%%%%%%%%%%%%%%%%% VECTORS SPACES %%%%%%%%%%%%%%%%%%%%%%

%%%%%%%%%%%%%%%%%%%%%%% LIMES %%%%%%%%%%%%%%%%%%%%%%%%%%%%%%%%

%%%%%%%%%%%%%%%%%%%%%%%  emphasize %%%%%%%%%%%%%%%%%%%%%%%%%%%%%%%%

%%%%%%%%%%%%%%%%%%%%%%% FLAGES  %%%%%%%%%%%%%%%%%%%%%%%%%%%%%%%%

%\ifthenelse{\equal{\toPrint}{part1}}

\newcommand{\ifdef}[3]{\ifthenelse{\equal{#1}{true}}{#2}{#3}}

%\newboolean{PrintEquation}
%\setboolean{PrintEquation}{true}

%%%%%%%%%%%%%%%%%%%%%%% SINGULARITY %%%%%%%%%%%%%%%%%%%%%%%%%%%%%%%%

%%%%%%%%%%%%%%%%%%%%%%% ABELIAN %%%%%%%%%%%%%%%%%%%%%%%%%%%%%%%%

%%%%%%%%%%%%%%%%%%%%%%% SPACING %%%%%%%%%%%%%%%%%%%%%%%%%%%%%%%%

%\newcommand{\npr}{\\ \indent}

%\def\baselinestretch{1.4}

%%%%%%%%%%%%%%%%%%%%%%% Nomeration %%%%%%%%%%%%%%%%%%%%%%%%%%%%%%%%
\pagenumbering{arabic} \pagestyle{plain}
%\numberwithin{equation}{subsection}
\numberwithin{equation}{section}

\input xy
\xyoption{all}

\usepackage{rotating}
\usepackage{amssymb,  mathrsfs, color,  rotating}

\input amssymb.sty
 \pagestyle{headings}

\textwidth 150mm \textheight 215mm \topmargin 0mm \evensidemargin
5mm \oddsidemargin 5mm

\newcommand{\ds}[1]{\ {#1} \ }

\newcommand{\dss}[1]{\quad {#1} \quad }
\newcommand\nk[1]{\operatorname{c-rk}(#1)}
\newcommand\rnk[1]{\operatorname{rk}(#1)}

\def\vrp{\varphi}
\def\tlm{\widetilde{m}}

\def\htE{\widehat{E}}
\def\htB{{\widehat{B}}}
\def\htW{{\widehat{W}}}
\def\htV{{\widehat{V}}}
\def\brW{\overline{W}}
\def\htX{\widehat{X}}

\def\lat{\mathscr L}
\def\flt{F}

\def\nook{{c-rank}}
\def\nookdep{{c-dependent}}
\def\nookdepc{{c-dependence}}
\def\nookind{{c-independent}}
\def\nookindc{{c-independence}}
\def\mfl{matroid flat-lattice}

\def\tlE{\widetilde{E}}

\newcommand\Atom[1]{\operatorname{Atom}({#1})}

\newcommand\Lat[1]{\operatorname{Lat}({#1})}

\newcommand\clos{{\operatorname{cl}}}
\newcommand\hgt[1]{{\operatorname{ht}}({#1})}

\def\prll{\, {\|} \, }

\newcommand\cmp[1]{{#1}^{\operatorname{c}}}
\newcommand\trn[1]{{#1}^{\operatorname{t}}}
\newcommand\stc[1]{{#1}_{\operatorname{stc}}}

\newcommand\cl[2]{\rwcl{#1}{\str}{#2}}
\newcommand\rw[2]{\rwcl{#1}{#2}{\str}}
\newcommand\clrw[3]{#1[#3,#2]}
\newcommand\rwcl[3]{#1[#2,#3]}

\def\one{\mathbb 1} \def\zero{\mathbb 0}

\def\str{\, \ast \,}

\newcommand\thmref[2]{\pSkip\textbf{Theorem #1. }\emph{#2}\pSkip}

\def\pSkip{{\vskip 1.5mm}\noindent}
\def\ltw{0.7\textwidth}
\newcommand\HH{\mathscr{H}}
\newcommand\MM{\mathscr{M}}

\def\semiring{semiring}

\newcommand\boxtext[1]{\pSkip \qquad \qquad \qquad \framebox{\parbox{\ltw}{#1}}\pSkip}

\def\Pow{\operatorname{Pw}}
\def\iff{\Leftrightarrow}
\def\imp{\Rightarrow}
\def\bF{\mathbb F}

\def\f2A{A_{\bF_2}}

\def\H{\HH}
\def\M{\MM}

\def\Cl{\operatorname{Col}}
\def\Rw{\operatorname{Row}}

\def\sm{\setminus}

\def\1{1^\nu}

\newcommand{\etype}[1]{\renewcommand{\labelenumi}{(#1{enumi})}}
% \roman, \arabic, \alph
\def\eroman{\etype{\roman}}
\def\ealph{\etype{\alph}}

\def\({\left(}
\def\){\right)}

\def\tGz{{\tG_0}}

\def\tTzB{\{0,1\}}

\def\bool{\mathbb B}

\def\sbool{{\mathbb{SB}}}

\newcommand{\per}[1]{\operatorname{per}({#1})}

\begin{document}

%******************************* title ***********************************

\title[Boolean Representations of Matroids and Lattices]
{Boolean Representations of Matroids and Lattices}

%******************************* authors *********************************

\author{Zur Izhakian}

\address{School of Mathematical Sciences, Tel Aviv
     University, Ramat Aviv,  Tel Aviv 69978, Israel.
\vskip 1pt
     Department of Mathematics, Bar-Ilan University, Ramat-Gan 52900,
Israel.
    }
    \email{zzur@math.biu.ac.il}

\thanks{The research of the first author has  been  supported  by the
Israel Science Foundation (ISF grant No.  448/09) and  by the
Oberwolfach Leibniz Fellows Programme (OWLF), Mathematisches
Forschungsinstitut Oberwolfach, Germany.}

\thanks{The second author gratefully acknowledges the hospitality of the
{Mathematisches Forschungsinstut Oberwolfach} during a visit to
Oberwolfach.}

\author{John Rhodes}
\address{Department of Mathematics, University of California, Berkeley,
970 Evans Hall \#3840, Berkeley, CA 94720-3840 USA.}
\email{blvdbastille@aol.com;rhodes@math.berkeley.edu}

\thanks{\textbf{Acknowledgement:} The authors would like to thank
Professors James Oxley and Ben Steinberg
 for the useful private communications.}

%%%%%%%%%%%%%%%%%%%%%%%%%%%%%%%%%%%%%%%%%%%%%%%%%%%%%%%%%%%%%%%%%%%%
%\thanks{\noindent --------------------- \\ File name: \jobname}

%******************************* AMS classification ***********************
\subjclass[2010]{Primary 52B40, 05B35, 03G05, 06G75, 55U10;
Secondary 16Y60, 20M30, 14T05.}

%******************************* date *************************************
\date{\today }

%******************************* keywords *********************************

\keywords{Boolean and superboolean algebra, Matroids, Matroid
flat-lattices, Partitions of lattices,  Boolean representations.}

%******************************* abstract *********************************

\begin{abstract} We introduce a new representation concept for
lattices by boolean matrices, and utilize it to prove that any
matroid is boolean representable. We show that such a
representation can be easily extracted  from a representation of
the associated lattice of flats of the matroid, leading also to a
tighter bound on the representation's size. Consequently, we
obtain a linkage of boolean representations with  geometry in a
very natural way.
\end{abstract}

\maketitle

%******************************* remarks *********************************

%%%%%%%%%%%%%%%%%%%%%%%%%%%%%%%%% section %%%%%%%%%%%%%%%%%%%%%%%%%%%%%

%{\small \tableofcontents}
%******************************* section *********************************
\section*{Introduction}

A matroid is a combinatorial structure that generalizes the
familiar notion of independence in classical linear algebra; this
structure arises often in many branches of pure and applied
studies \cite{murota,Oxley03whatis,White2}. The classical theory
of matroid representations essentially deals with the realization
of matroids as ``vector spaces", allowing therefore the
utilization  both of algebraic and geometric tools in matroid
theory; matroids that do have such a realization are termed
representable. See \cite{oxley:matroid,White2}.

Over the years much effort in the study of this classical
representation theory has been invested in the attempt to specify
classes of matroids that are realizable as vector spaces defined
over fields \cite{Whittle}. The understanding that not all
matroids are representable in the field sense (see
\cite{Oxley03whatis}) has provided the motivation for considering
``vector spaces" built over other ``weaker" ground structures
instead of a field, for example a partial field
\cite{SempleWhittle} or a quasi field~\cite{DressWenzel}. Yet, the
representations taking place over these structures provide an
incomplete result -- they do not capture all matroids. As shown in
\cite{IJmat}, the superboolean semiring provides a complete
appropriate alternative framework to that of fields  which have
 customarily served for matroid representations
\cite{tutte2,whitney,Whittle}.

% Moreover, this structure enables us
%to capture representations of hereditary collections (also known
%as finite abstract simplical complex) -- a much wider class of
%objects which includes matroids.

Superboolean representations, and more generally supertropical
representations, of hereditary collections  have been studied
initially
 in \cite{IJmat} where it was shown that every hereditary collection has a superboolean
representation and hence every matroid does also.
%; they have been
% shown to be feasible for any hereditary collection, and therefore
% for any matroid.
(Hereditary collections, also known as finite
abstract simplicial complexes, are a  much wider class of objects
which includes matroids.)

 The present paper
stresses further the study  of boolean representations, focusing
on matroids and their associated lattices; these representations
are much simpler and more accessible for computations. Our main
result is:
\thmref{\ref{cor:boolRep}}{Any matroid is boolean-representable.}
The proof of the theorem makes crucial use of the fundamental
connection, which is at the heart of our new approach, between
lattices and boolean representations, leading to a new
representation concept of finite lattices (Definitions~
\ref{kdef:latrep} and \ref{kdef:crank}). Incorporating this
lattice representation perspective, applied to the geometric
lattice of flats of a matroid, we obtain a boolean representation
for any matroid (Theorem~\ref{thm:matRep}). In particular, we
provide an explicit way to directly extract such matroid
representations from boolean representations of the associated
lattices of flats of matroids (Theorem~\ref{thm:matRep}).

The novel concept of lattice representations by boolean matrices
is presented in \S\ref{ssec:LatRep},  leading naturally to the new
notions of \nookindc \ and \nook \ of lattices (Definition
\ref{kdef:crank}). These notions are also applicable to other
abstract structures such as semilattices and partial ordered sets,
as studied in detail in \cite{IJdim,RS}.

We show that our new notion of \nookindc \ for lattices, yielding
also the notion of \nook, is properly compatible with the length
of chains of a lattice. As a consequence,  the \nook \
 of the representation of a lattice
equals the height of the lattice (Theorem~\ref{thm:htnook}). This
result has a deeper meaning, it establishes a significant
correspondence between \nookdepc \ of sup-generating subsets of a
lattice and its partitions (\S \ref{ssec:partition}, also see
Theorem \ref{thm:crossInd}). These correspondences provide a
strong evidence that our new notions are the appropriate ones for
the working  with lattices, and has strong connections with
chamber systems

The way of representing matroids via their flat-lattice
establishes an easy systemic construction procedure
%, making the
% computations much comprehensive and simper.
Moreover, it gives us a tighter upper  bound on the representation
size of matroids (cf.~ \S\ref{ssec:bound}). Employing a natural
embedding of the boolean semiring in the tropical semiring, our
results are easily generalized further, showing that all matroids
are tropically representable as well
(Corollary~\ref{cor:tropRep}). Furthermore, more generally,  this
result also holds for any idempotent semiring.

The extra benefit arises  from our development is the important
linkage between boolean matrices and geometry, established
naturally by use of geometric lattices which are lattices of flats
of matroids \cite[Theorem 1.7.5]{oxley:matroid}.
%; this subject is to be study in detail in a future paper.

Some indication on the connections of our results with previous
research (up to the authors limited knowledge) is in order. The
first author in \cite{IzhakianTropicalRank} in 2006 defined the
notion of independence for columns (rows) of a matrix with
coefficients in a supertropical semiring. Restricting this notion
to the superboolean semiring $\sbool$ and further to the subset of
boolean matrices, we obtain the notion of independence for columns
of a boolean matrix, to be used in this paper. Around mid 2008 the
second author saw how to apply \cite{IzhakianTropicalRank} in
other areas or mathematics, which later led to this paper and
\cite{IJdim,IJmat}.

Much earlier, in the 1990's, unknown by the first author, Dress
and Wenzel \cite{DW,WW} had also isolated  the superboolean
semiring as the correct semiring of coefficients for matroids, see
\cite[pp164]{WW}.
% However their approach is extremely different from our approach.
They showed that every matroid $\M := (E, \tH)$ gives rise to a
Grassmann-Pl\"{u}cker map from $E$  to $\sbool$. This
``determinant-type" map can be used to define independent sets of
a matroid, and in Whitney \cite{whitney} or in \S\ref{ssec:LatRep}
below, instead of the classical determinant. Thus, each matroid
has its own determinant-type map.

Our approach is extremely different. Although, as Dress and
Wenzel, we require the coefficients to be superboolean, we choose
one determinant-type map for all matroids, that is the permanent
of square superboolean matrices (cf. Lemma \ref{lem:2.1.f}). Then
we naturally adopt Whitney's \cite{whitney} approach, only that he
uses classical determinant while we use permanent. Note however
that permanent is not a Grassmann-Plucker function. This is more
in the flavor of \cite{Develin2003} than Dress and Wenzel.
    Also the authors independently rediscovered related
     results (cf.
     \S\ref{ssec:partition}) of Bjorner and Ziegler from 1991 \cite{BZ} on taking transversals
     on the partitions defined by maximal chains in the lattice of flats .

% These approaches are summarized in the following table:

% The representations approaches of the different axiom systems and
% some remarks are summarized in the following table:
The following comments on axioms and representations of hereditary
collection and matroids may be helpful for the reader.

\pSkip \pSkip

\begin{tabular}{|l|l|l|}
  \hline
  % after \\: \hline or \cline{col1-col2} \cline{col3-col4} ...
Axioms   &   Representations & Comments \\ \hline
                 Matroid   &

    Matrices over fields

  &                -- Strong exchange axiom \\  (Whitney \cite{whitney}) & &
                             -- Weak representations \\  & &
                             --     Representation size equals the matroid  rank

                                    \\ & &
                              --       Not all matroids are
                                    representable

\\ \hline
                      Hereditary collection &
                              Superboolean matrices  &
                    -- Weak axioms \\ (Def. \ref{def:hereditary}) & (See \cite{IJmat})&
                                       -- Very strong
                                       representations
\\ & &
                                      --  Every hereditary   collection is  representable                                       \\ & &
                                      --  Applications of this result are
                                      needed
\\ \hline
                           Point replacement  &
                                 Boolean matrices &

                   --   All matroids are representable (strong)
\\ (PR, Def. \ref{def:ratroid}) & &
                   -- Interesting to determine which
hereditary \\ & &  \ \ collections
 have boolean representations \\ & &  \ \ to strengthen  the PR axiom.
\\ & &
                                      -- Boolean matrices representation are \\ & & \ \ probably  not
                                      that    far from
                                       matroid.
\\ \hline
\end{tabular} \\

\begin{notation*}\label{nott:set}
 In this paper, for simplicity, we use the
following notation: Given a subset $X \subseteq E$, and elements
$x \in X$ and $p \in E$, we write $X- x$ and $X +y$ for $X \sm \{
x\}$ and $X \cup \{ y \}$, respectively; accordingly we write
$X-x+y$ for
 $(X\sm \{x\}) \cup \{ y\}$.
%
% Abusing the terminology, we sometimes say
% that an element $p \in E$ is independent iff $\{ p\}$ is
%independent, i.e., $\{ p\} \in \tH$.
\end{notation*}

\section{Preliminaries}

\subsection{Boolean and superboolean algebras}
%The very well known  \textbf{boolean \semiring} is the two element
%idempotent \semiring \ $\bool := (\{ 0,1\}, + , \cdot \;),$ whose
%addition and multiplication are given by the following tables:
%$$ \begin{array}{l|ll}
%   + & 0 & 1  \\ \hline
%     0 & 0 & 1  \\
%     1 & 1 & 1 \\
%   \end{array} \qquad \text{and} \qquad
%\begin{array}{l|ll}
%   \cdot & 0 & 1\\ \hline
%     0 & 0 & 0  \\
%     1 &  0 & 1  \\
%   \end{array} \ .
%   $$
%%
The \textbf{superboolean \semiring}  $\sbool : = (\{ 1, 0, 1^\nu
\},  + , \cdot
 \;)$  is a three element supertropical \semiring \ \cite{IzhakianRowen2007SuperTropical}, a
``cover'' of the familiar boolean \semiring \ $\bool := (\{ 0,1\},
+ , \cdot \;)$, endowed with the two binary operations:
$$ \begin{array}{l|lll}
   + & 0 & 1 & 1^\nu \\ \hline
     0 & 0 & 1 & 1^\nu \\
     1 & 1 & 1^\nu & 1^\nu \\
     1^\nu &1^\nu & 1^\nu & 1^\nu \\
   \end{array} \qquad
\begin{array}{l|lll}
   \cdot & 0 & 1 & 1^\nu \\ \hline
     0 & 0 & 0 & 0 \\
     1 &  0 & 1 & 1^\nu \\
     1^\nu & 0 & 1^\nu & 1^\nu \\
   \end{array}
   $$
 addition and multiplication, respectively.
This semiring is totally  ordered by $ 1^\nu \
> \ 1 \
> \ 0 .$
Note that the boolean semiring $\bool$ is an idempotent \semiring,
while $\sbool$ is not, since $1 +1 = \1$; thus $\bool$ is
\textbf{not} a subsemiring of $\sbool$. The element $\1$ is called
the \textbf{ghost} element of $\sbool$, where $\tGz := \{0, \1\}$
is the \textbf{ghost ideal} of $\sbool$. (See \cite{IJmat} for
more details.)

%Two elements $a$ and $b$ of $\sbool$ are
%$\nu$-\textbf{equivalent}, written $a \nucong b$, if $a = b$ or
%$a, b \in \{1,  \1 \}$.

% \subsection{Boolean matrices}\label{ssec:boolMat}

Superboolean matrices are matrices with entries in ~$\sbool$,
defined in the standard way, where addition and multiplication
(respecting matrix sizes) are induced from the operations of
$\sbool$ as in the familiar matrix construction.
%
%The unit element $I$ of
% $M_n(\sbool)$, is the matrix with $1$ on the main diagonal and
%whose off-diagonal entries are all $0$.
%
A typical matrix is often denoted as $A =(a_{i,j})$, and  the zero
matrix is  written as $(0)$.
%A matrix is said to be a
% \textbf{ghost matrix} if all of its entries are in $\tGz$.
A boolean matrix is a matrix with coefficients in $\tTzB$. In what
follows, these matrices are considered as superboolean matrices
with entries in the subset $\tTzB \subset \sbool$.
%(Note that boolean matrices $M_n (\bool)$
%are \textbf{not} a sub-semiring of the semiring of superboolean
%matrices $M_n(\sbool)$.)
The reader should keep in mind that the boolean matrices are only
a subset of the superboolean matrices and \textbf{not}  a
subsemiring.

\begin{definition}\label{def:matComp}
The \textbf{complement}  $\cmp{A} := (\cmp{a}_{i,j})$ of a
superboolean matrix $A = (a_{i,j})$ is defined by the rule: $$
\cmp{a}_{i,j} = 1  \dss \Leftrightarrow  a_{i,j} = 0.$$ The
\textbf{transpose} $\trn{A} = (\trn{a}_{i,j})$ of $A$ is given by
$ \trn{a}_{i,j}   = a_{j,i} .$
\end{definition}
\noindent  Note that by this definition, we have  $ \cmp{a}_{i,j}
= \1 = {a_{i,j}}$.

\begin{proposition} Transposition and complement commute, i.e.,
$\cmp{(\trn{A})} = \trn{(\cmp {A})}$  for any $n \times n$
superboolean matrix $A$.
\end{proposition}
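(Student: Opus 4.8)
The plan is to verify the identity entry-by-entry, since both operations are defined entrywise and the claim is a purely combinatorial statement about how the two index-manipulations interact. The key observation is that transposition swaps the roles of the row and column indices, namely $(\trn{A})_{i,j} = a_{j,i}$, while the complement operation acts at each fixed position independently, sending $a_{k,\ell}$ to its complement $\cmp{a}_{k,\ell}$ without moving the entry. Because one operation only permutes positions and the other only alters the value sitting at each position, their composition should be independent of the order in which they are applied.

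First I would fix an arbitrary position $(i,j)$ and compute the $(i,j)$-entry of each side. For the left-hand side $\cmp{(\trn{A})}$, I would apply the complement definition to the matrix $\trn{A}$, obtaining $\bigl(\cmp{(\trn{A})}\bigr)_{i,j} = \cmp{(\trn{a}_{i,j})} = \cmp{(a_{j,i})}$, using Definition~\ref{def:matComp}. For the right-hand side $\trn{(\cmp{A})}$, I would first apply the transpose definition to the matrix $\cmp{A}$, giving $\bigl(\trn{(\cmp{A})}\bigr)_{i,j} = (\cmp{A})_{j,i} = \cmp{(a_{j,i})}$. The two expressions visibly coincide, so the matrices agree in every entry and hence are equal.

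The only point requiring a moment's care is the third value $\1$: the complement in Definition~\ref{def:matComp} is specified by the rule $\cmp{a}_{i,j} = 1 \iff a_{i,j} = 0$, and the note following the definition records that $\cmp{a}_{i,j} = \1$ precisely when $a_{i,j} = \1$. Thus the complement is a well-defined function on all three elements of $\sbool$, and in particular it commutes with the relabeling of indices performed by transposition, since relabeling cannot change which of the three values occupies a given slot. I do not anticipate any genuine obstacle here; the proposition is essentially a bookkeeping verification that two entrywise-defined operations on superboolean matrices commute, and the argument above is complete once the single entry computation is carried out.
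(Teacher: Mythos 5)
Your proof is correct and matches the paper's own argument, which is exactly the same one-line entrywise computation $\cmp{(\trn{a}_{i,j})} = \cmp{(a_{j,i})} = \trn{(\cmp{a}_{i,j})}$. Your additional remark about the ghost element $\1$ being fixed by the complement is a harmless (and reasonable) elaboration, but it does not change the substance of the argument.
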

\begin{proof} Straightforward,  $\cmp{(\trn{a}_{i,j})} = \cmp{({a_{j,i}})} =  \trn{(\cmp{a}_{i,j})}.$
\end{proof}

We define the \textbf{permanent} of an $n \times n$ superboolean
matrix $A = (a_{i,j})$ as in  the standard way:
\begin{equation}\label{eq:det1}
\per{A} := \sum _{\pi \in S_n}a_{\pi (1),1} \cdots a_{\pi (n),n},
\end{equation}
where $S_n$ stands for the group of permutations  of
$\{1,\dots,n\}$. Accordingly, \textbf{the permanent of a boolean
matrix can be $\1$}. We say that a matrix $A$ is
\textbf{nonsingular} if $\per{A} = 1$, otherwise $A$ is said to be
\textbf{singular} \cite{zur05TropicalAlgebra}.

\begin{lemma}[{\cite[Lemma 3.2]{IJmat}}]\label{lem:2.1.f}
An  $n \times n $ matrix  is nonsingular iff by independently
permuting rows and columns it has the triangular form
\begin{equation}\label{eq:trgform}
  A' :=  \(\begin{array}{cccc}
  1 & 0 &\cdots & 0 \\
  * & \ddots &  \ddots &\vdots\\
\vdots & \ddots & 1& 0\\
  * &  \cdots &*  & 1\\
   \end{array}\),
\end{equation}
 with all diagonal entries $1$, all entries above the diagonal are
 $0$, and the entries below the diagonal belong to  $\{1, \1, 0 \}$.

%  Such
% reordering of $A$ is equivalent to multiplying the matrix $A$ by two
% permutation matrices $\Pi_1$ and~ $\Pi_2$ on the right and on the
% left, respectively, i.e., $A' := \Pi_1 A \Pi_2$.
\end{lemma}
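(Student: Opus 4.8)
The plan is to first translate the numerical condition $\per{A}=1$ into a purely combinatorial statement about the nonzero pattern of $A$, and then to read off the triangular form \eqref{eq:trgform} by a topological sorting argument. The proof then splits into the two implications of the \emph{iff}, with almost all the substance concentrated in one direction.

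The first and decisive step is to exploit the addition table of $\sbool$. Each summand $a_{\pi(1),1}\cdots a_{\pi(n),n}$ in \eqref{eq:det1} lies in $\{0,1,\1\}$: it equals $0$ if the transversal $\{a_{\pi(j),j}\}_j$ meets a zero entry, it equals $1$ if all these entries are $1$, and it equals $\1$ if they are all nonzero but at least one is $\1$. Since $1+1=\1$, since $x+\1=\1$ for every $x\ne 0$, and since $0$ is neutral, the whole sum collapses to $1$ \emph{precisely} when exactly one permutation contributes $1$ and every other permutation contributes $0$ (any contribution $\1$, or two contributions $1$, would force the sum to be $\1$). Hence I would record the reformulation that drives both implications: $\per{A}=1$ iff there is a unique permutation $\pi_0\in S_n$ whose transversal avoids all zero entries, and moreover $a_{\pi_0(j),j}=1$ for every $j$.

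For the direction ``$A'$ is nonsingular'', I would first note that the permanent is invariant under independent permutations of rows and columns, since commutativity of multiplication in $\sbool$ makes each summand depend only on the chosen transversal; thus $\per{A}=\per{A'}$. In the form \eqref{eq:trgform} all entries strictly above the diagonal vanish, so a transversal of $A'$ avoids zeros only if $\pi(j)\ge j$ for all $j$; summing over $j$ gives $\sum_j\pi(j)=\sum_j j$, which together with $\pi(j)\ge j$ forces $\pi=\id$. Therefore the identity is the unique zero-avoiding transversal and its product is $\prod_j a'_{j,j}=1$, so $\per{A'}=1$ by the reformulation.

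The converse is where the real work lies. Starting from $\per{A}=1$, I would apply the row permutation $\pi_0$ to bring the distinguished transversal onto the diagonal; the resulting matrix $B$ has $b_{i,i}=1$ for all $i$, and the identity is now its unique zero-avoiding transversal. The key idea is to encode the off-diagonal nonzero pattern as a digraph $D$ on $\{1,\dots,n\}$ with an arc $j\to i$ whenever $i\ne j$ and $b_{i,j}\ne 0$. A directed cycle $j_1\to\cdots\to j_k\to j_1$ in $D$ would furnish the nonidentity permutation cycling $(j_1\,\cdots\,j_k)$ whose transversal avoids all zeros, contradicting uniqueness; hence $D$ must be acyclic. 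I expect this passage from ``unique zero-avoiding transversal'' to acyclicity to be the main obstacle and the crux of the argument; once it is in hand, the rest is routine, since an acyclic $D$ admits a topological ordering $\tau$ with $\tau(j)<\tau(i)$ along every arc $j\to i$. Applying $\tau$ simultaneously to the rows and columns of $B$ preserves the all-$1$ diagonal and moves every off-diagonal nonzero entry $b_{i,j}$ to a position with row index above column index, i.e.\ strictly below the diagonal; consequently all entries above the diagonal become $0$, while the below-diagonal entries remain unconstrained elements of $\{1,\1,0\}$. This is exactly the form \eqref{eq:trgform}, completing the proof.
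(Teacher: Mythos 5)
Your proof is correct. Note that the paper does not actually prove this lemma: it is imported verbatim from \cite[Lemma 3.2]{IJmat}, so there is no internal argument to compare against, and your proposal must stand on its own --- which it does. The three pillars all check out: (i) in $\sbool$ a sum of terms from $\{0,1,\1\}$ equals $1$ exactly when precisely one term is $1$ and all others are $0$, i.e.\ $\per{A}=1$ iff there is a unique zero-free transversal and it consists entirely of $1$'s; (ii) in the triangular form, a zero-free transversal forces $\pi(j)\ge j$ for all $j$, hence $\pi=\id$ by comparing $\sum_j \pi(j)$ with $\sum_j j$, so the permanent is $1$ regardless of the below-diagonal entries; (iii) conversely, after moving the distinguished transversal to the diagonal, a directed cycle in your digraph $D$ would yield a second zero-free transversal (the cycle permutation, padded with diagonal $1$'s), so $D$ is acyclic and a topological order applied simultaneously to rows and columns pushes every off-diagonal nonzero entry strictly below the diagonal. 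This unique-term characterization followed by an acyclicity/ordering argument is the natural route and is essentially the content of the cited proof, so nothing further is needed.
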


Lat $A$ be an $m \times n$ superboolean matrix. We say that an $k
\times \ell$ matrix $B$, with $k \leq m$ and $\ell \leq n$,  is a
\textbf{submatrix} of~$A$ if $B$ can be obtained by deleting rows
and columns of~$A$. In particular, a \textbf{row} of a matrix $A$
is an $1 \times n$ submatrix of $A$, where a \textbf{subrow} of
$A$ is an $1 \times \ell$ submatrix of $A$, with  $\ell \leq n$.

%A \textbf{minor} is a submatrix obtained by deleting exactly one
%row and one column of a square matrix.

%\begin{definition}[{\cite[Definition 3.3]{IJmat}}]\label{def:marker}
%A $k$-\textbf{marker} $\rho$ in a matrix $A$ is a subrow having a
%single $1$-entry and all whose other $k-1$ entries are $0$.
%% \end{definition}
%In particular an $n \times n$ nonsingular matrix $A'$ of the From
%\eqref{eq:trgform} has a $k$-marker for each $k = 1,\dots, n$,
%appearing in this order from bottom to top. (Note that in general
%markers need not be disjoint.)
%
%\begin{corollary}[{\cite[Corollary 3.4]{IJmat}}]\label{cor:nonsing}
%Any $n \times n$ nonsingular matrix $A$ has an
%$n$-marker. %\mnote{B13 to prove}
%\end{corollary}

The following definition is key to all that follows; it includes
the definition of when a subset of columns (rows) of a
superboolean matrix are independent.

\begin{definition}[{\cite[Definition 1.2]{IzhakianTropicalRank}}]\label{def:tropicDep}
A collection of vectors  $v_1,\dots,v_m \in \sbool^{(n)}$ is said
to be (linearly) \textbf{dependent}
 if there exist $\al_1,\dots,\al_m \in \tTzB$,  not all of them $0$,
 for which
$
   \al_1 v_1 +  \cdots + \al_m  v_m \in
  \tGz^{(n)}.
$ Otherwise the vectors are said to be \textbf{independent}.
\end{definition}

The \textbf{column rank} of a superboolean matrix $A$ is defined
to be the maximal number of independent columns of $A$. The
\textbf{row rank} is defined similarly with respect to the rows of
$A$.

%\begin{corollary}[{\cite[Corollary 3.7]{IzhakianTropicalRank}}]\label{cor:nRank}
%A matrix in $M_n(\sbool)$ is of rank $n$ iff it is nonsingular.
%\end{corollary}

\begin{theorem}[{\cite[Theorem
3.11]{IzhakianTropicalRank}}]\label{thm:rnkSing} For any
superboolean matrix $A$ the row rank and the column rank are the
same, and this rank is equal to the size of the maximal
nonsingular submatrix of $A$.
\end{theorem}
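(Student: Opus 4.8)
The plan is to reduce the whole statement to a single combinatorial characterisation of independent columns: \emph{a set of $m$ columns of $A$ is independent (in the sense of Definition~\ref{def:tropicDep}) if and only if those columns contain an $m\times m$ nonsingular submatrix}. Granting this claim, the three quantities coincide almost formally. If $A$ has a nonsingular $k\times k$ submatrix, then its $k$ columns are independent, so the column rank is at least the size of a maximal nonsingular submatrix; conversely a maximal independent set of columns, whose size is the column rank, contains a nonsingular square submatrix of that same size, giving the reverse inequality. Thus the column rank equals the size of a maximal nonsingular submatrix. Since $\per{\trn{B}}=\per{B}$ for every square $B$ (the permanent in \eqref{eq:det1} is manifestly invariant under transposition over the commutative semiring $\sbool$), a square submatrix of $A$ is nonsingular exactly when the corresponding submatrix of $\trn{A}$ is; applying the column statement to $\trn{A}$ then shows the row rank of $A$ likewise equals the size of a maximal nonsingular submatrix. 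Hence all three agree.

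For the easy half of the claim, suppose $m$ columns contain a nonsingular $m\times m$ submatrix $B$. By Lemma~\ref{lem:2.1.f}, after independently permuting its rows and columns $B$ is lower triangular with $1$'s on the diagonal and $0$'s above it; such permutations affect neither the independence of the columns nor the membership of a column-sum in $\tGz^{(n)}$. For a nonempty set $S$ of columns let $j=\min S$ in the triangular order: in row $j$ the diagonal entry (column $j$) is $1$, while the entries in the other columns of $S$ lie above the diagonal and are $0$, so the $j$-th coordinate of $\sum_{i\in S}v_i$ equals $1$ and is tangible. Thus no nonempty boolean combination of the columns of $B$ lands in $\tGz^{(n)}$, so these columns---and therefore the full columns of $A$ that contain $B$, since a dependence of the latter restricts to a dependence of $B$---are independent.

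The substantial direction is the converse: independent columns $v_1,\dots,v_m$ must contain a nonsingular $m\times m$ submatrix. I would argue by induction on $m$, the case $m=1$ being just the observation that an independent single column is not a ghost vector and hence has a tangible entry. For the inductive step, apply independence to the full index set: the sum $v_1+\cdots+v_m$ has some tangible coordinate $r$, and since a $\sbool$-sum equals $1$ only when exactly one summand is $1$ and the rest are $0$, there is a \emph{unique} column---say $v_{i_0}$---with a $1$ in row $r$, all others having $0$ there. Independence is hereditary (a boolean dependence of a subcollection is one of the whole), so the remaining $m-1$ columns are independent and, by induction, contain a nonsingular $(m-1)\times(m-1)$ submatrix $M'$ on some row set $R'$. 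Let $M$ be the $m\times m$ submatrix on rows $R'\cup\{r\}$ and all $m$ columns, and expand the permanent \eqref{eq:det1} along row $r$: this is a valid identity over the commutative semiring $\sbool$ (merely a regrouping of the defining sum, using no subtraction), and because row $r$ has its only nonzero entry, a $1$, in column $i_0$, the expansion collapses to $\per{M}=\per{M'}=1$. Hence $M$ is nonsingular, completing the induction.

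The main obstacle is precisely this converse direction, where one cannot simply delete a column as in the vector-space setting. The device making the induction succeed is the choice of pivot row $r$ forced by the rigidity of tangibility in $\sbool$ (a tangible sum pinpoints a single $1$ with all other entries $0$), together with the fact that the permanent admits a subtraction-free row expansion, so that this isolated $1$ in row $r$ reduces $\per{M}$ cleanly to $\per{M'}$.
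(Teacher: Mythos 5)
Your proof is correct, but it cannot be compared to ``the paper's own proof'' in the usual sense, because the paper offers none: Theorem~\ref{thm:rnkSing} is quoted from \cite{IzhakianTropicalRank} (Theorem 3.11 there), and the witness criterion that you take as your key claim is itself quoted, as Corollary~\ref{cor:witnes}, from \cite{IJmat}. In effect you reverse the paper's logical order: the paper derives the witness criterion as a corollary of the rank theorem, while you prove the witness criterion directly, by induction on the number of columns, and then obtain the rank theorem from it formally, using the transpose-invariance of the permanent (which, like the subtraction-free row expansion you invoke, is valid over any commutative semiring). Your argument is also genuinely more elementary than the cited source, which works in the general supertropical setting: your pivot step --- a tangible coordinate of $v_1+\cdots+v_m$ forces a unique entry $1$ in that row with all other entries $0$ --- exploits the fact that the only tangible elements of $\sbool$ are $0$ and $1$, and would fail in a larger supertropical semiring, where a tangible sum pinpoints only a unique maximal summand, not zeros elsewhere. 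One step you should make explicit: when you form $M$ on the rows $R'\cup\{r\}$ you need $r\notin R'$ for $M$ to be $m\times m$. This is automatic, but it deserves a line: every column other than $v_{i_0}$ has entry $0$ in row $r$, so any square submatrix of those columns that used row $r$ would contain a zero row and hence have permanent $0$, contradicting the nonsingularity of $M'$. With that remark added, the induction is complete and the theorem follows as you describe.
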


\begin{corollary}[{\cite[Corollary 3.4]{IJmat}}]\label{cor:witnes}
A subset of $k$ columns (or rows) of $A$ is independent  iff it
contains a $k \times k$ nonsingular submatrix.
\end{corollary}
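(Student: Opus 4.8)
The plan is to derive this corollary directly from Theorem~\ref{thm:rnkSing}, using the fact that independence is a hereditary property of column-subsets. First I would fix a subset $S$ of $k$ columns of $A$ and let $A_S$ denote the $m \times k$ submatrix of $A$ consisting precisely of those columns. The key observation is that a collection of columns of $A$ is independent in the sense of Definition~\ref{def:tropicDep} exactly when the corresponding columns of $A_S$ are independent, since the defining condition $\al_1 v_1 + \cdots + \al_k v_k \in \tGz^{(m)}$ refers only to the entries appearing in those columns; deleting the other columns of $A$ changes nothing. Thus the $k$ columns of $A$ are independent iff the column rank of $A_S$ equals $k$, i.e.\ iff all $k$ columns of $A_S$ are independent.

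Next I would invoke Theorem~\ref{thm:rnkSing} applied to the matrix $A_S$: its column rank equals the size of its maximal nonsingular submatrix. For the forward direction, if the $k$ chosen columns are independent, then the column rank of $A_S$ is $k$, so by Theorem~\ref{thm:rnkSing} there is a nonsingular $k \times k$ submatrix of $A_S$; since $A_S$ has exactly $k$ columns, this submatrix uses all of them, and it is visibly a submatrix of $A$ as well. For the converse, if the $k$ columns contain a $k \times k$ nonsingular submatrix $B$, then $B$ is a maximal-size nonsingular submatrix sitting inside $A_S$ (it cannot be exceeded, as $A_S$ has only $k$ columns), so the column rank of $A_S$ is $k$, whence all $k$ columns are independent.

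The statement for rows follows by the symmetric argument, or more cleanly by transposition: the rows of $A$ are the columns of $\trn{A}$, and a $k \times k$ submatrix of $A$ is nonsingular iff its transpose, a $k \times k$ submatrix of $\trn{A}$, is nonsingular, since the permanent defined in~\eqref{eq:det1} is transpose-invariant (each product $a_{\pi(1),1}\cdots a_{\pi(n),n}$ is unchanged under $A \mapsto \trn{A}$ upon reindexing by $\pi^{-1}$). Hence the row case reduces to the already-proved column case.

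There is really no hard part here: the corollary is essentially a specialization of Theorem~\ref{thm:rnkSing} to the case where the ambient matrix has exactly $k$ columns. The only point requiring a moment's care is the restriction step, namely verifying that independence of a subset of columns is intrinsic to the submatrix they span and insensitive to the deleted columns; this is immediate from Definition~\ref{def:tropicDep} but should be stated explicitly so that the equivalence ``column rank of $A_S$ equals $k$'' $\iff$ ``the $k$ columns are independent'' is justified before Theorem~\ref{thm:rnkSing} is applied.
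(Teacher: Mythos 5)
Your proof is correct, but there is nothing in the paper to compare it against: Corollary~\ref{cor:witnes} is imported verbatim from \cite[Corollary 3.4]{IJmat} and the present paper gives no proof of it. Your argument is the natural one and is complete --- restrict to the $m \times k$ submatrix on the chosen columns, note that independence in the sense of Definition~\ref{def:tropicDep} is a property of the collection of vectors alone (so deleting the other columns changes nothing), and apply Theorem~\ref{thm:rnkSing} to that submatrix in both directions, using that any $k \times k$ submatrix of an $m \times k$ matrix must involve all $k$ columns. Your handling of the row case is also fine: the permanent \eqref{eq:det1} is transpose-invariant, so nonsingularity passes to $\trn{A}$; alternatively you could have appealed directly to the equality of row rank and column rank, which is itself part of the statement of Theorem~\ref{thm:rnkSing}.
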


%\begin{theorem}[{\cite[Theorem 3.6]{IzhakianRowen2009TropicalRank}}]\label{thm:base} Let $A =
% (a_{i,j})$ be an $m \times n $ matrix with $n \geq m$, and suppose
% that each of whose $m \times m$ submatrices is singular. Then the
% rows $v_1, \dots, v_m$ of $A$ are dependent.
% \end{theorem}

%
%
%\begin{corollary}\label{cor:nRankTran}
%The rank of a superboolean matrix is invariant under
%\begin{enumerate}\eroman
%
%   \item permuting of rows (columns); \pSkip
%
%   \item deletion of a row (column) whose entries are all in $\tGz$; \pSkip
%
%   \item deletion of a repeated  row or column; \pSkip
%
%   \item  transposition, i.e., $\rnk{A} = \rnk{\trn{A}}$.
%\end{enumerate}
%\end{corollary}

In the sequel, we use the following notations for submatrices:
\begin{notation}\label{nott}
We write $\cl{A}{Y}$ for the submatrix of $A$ having the column
subset $Y \subseteq \Cl(A)$, which  sometimes is refer to as a
collection of vectors, but no confusion should arise. Similarly,
we write $\rw{A}{X}$ for the submatrix of $A$ having the row
subset $X \subseteq \Rw(A)$, also refer to as a collection of
vectors. We define $\clrw{A}{Y}{X}$ to be the submatrix of $A$
having the intersection of columns $Y$ and the row subset $X
\subseteq \Rw(A)$, often also referred to as a collection of
sub-vectors.
\end{notation}%\mnote{B15 row-> col}

%******************************* section *********************************
%\section{Hereditary collections  and simplicial complexes}

\subsection{Hereditary collections}   We
write $|E|$ for the cardinality of a given finite ground set $E$,
and $\Pow(E)$ for the \textbf{power set} of $E$. In what follows,
unless otherwise is specified, we always assume that $|E| = n$,
and thus have $|\Pow(E)| = 2^n$. Subsets of $E$ of cardinality $k$
are termed $k$-sets, for short.

\begin{definition}\label{def:hereditary}
A \textbf{hereditary collection} (or a finite abstract simplical
complex) is a pair $\H := (E,\tH)$, with $E$ finite and collection
$\tH \subseteq \Pow(E)$, that satisfies the axioms: \boxtext{
\begin{enumerate} \eroman
    \item[HT1:] \ $\tH$ is nonempty, \pSkip

    \item[HT2:]  \ $X \subseteq Y$, $Y \in \tH \ \imp \ X \in
    \tH$.
\end{enumerate}}
\end{definition}
A subset $X \in \tH$  is said to be \textbf{independent},
otherwise $ X\notin \tH$ is called \textbf{dependent}.
 A minimal dependent subset (with respect to inclusion)  of $E$ is
called a \textbf{circuit}, the collection of all circuits of a
hereditary collection $\H$ is denoted by $\tC(\H)$.
 A maximal
independent  subset (with respect to inclusion) is called a
\textbf{basis} of a hereditary collection $\H$. The set of all
bases of $\H$ is denoted as $\tB(\H) \subseteq \tH$ and termed the
\textbf{basis set} of $\H$. The \textbf{rank} $\rnk{\H}$ of  $\H$
is defined to be the cardinality of the largest member of the
basis set $\tB(\H)$ of $\H$.

\begin{definition}\label{def:HCIso}  Hereditary collections $\H_1 = (E_1,
\tH_1)$ and $\H_2 = (E_2, \tH_2)$ are said to be
\textbf{isomorphic} if there exits a bijective map $ \varphi : E_1
\to E_2$ that respects independence; that is $$\varphi(X_1) \in
\tH_2 \ \Leftrightarrow \ X_1 \in \tH_1, \qquad  \text{for any }
X_1 \subseteq E_1.$$
\end{definition}

Given a hereditary collection, we recall the following axiom:
\begin{definition}\label{def:ratroid} We say that a hereditary collection $\H = (E,\tH)$ satisfies the \textbf{point replacement
property} iff
 \boxtext{
\begin{enumerate} \eroman
    \item[PR:]  For every  $\{ p\}  \in \tH$ and every nonempty subset $J \in \tH$ there exists
    $x \in J$ such that $J - x  + p \in \tH$.
\end{enumerate}}
\end{definition}
The existence of a boolean representation (to be defined next) of
a hereditary collection implies the point replacement property.
\begin{theorem}[{\cite[Theorem 5.3]{IJmat}}] If a hereditary collection $\H = (E,\tH)$ has a  $\bool$-representation,
then $\H$ satisfies PR.
\end{theorem}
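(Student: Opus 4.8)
The plan is to translate the two combinatorial hypotheses into statements about the representing boolean matrix $A$ (whose columns are indexed by $E$, with a subset $X \subseteq E$ lying in $\tH$ precisely when the columns of $A$ indexed by $X$ are independent in the sense of Definition \ref{def:tropicDep}), and then to run a superboolean analogue of the Steinitz exchange. First I would record what the hypotheses say. A nonzero boolean column never lies in the ghost ideal $\tGz^{(n)}$, so $\{p\}\in\tH$ simply means that the column $v_p$ is nonzero. Writing $k=|J|$, the hypothesis $J\in\tH$ together with Corollary \ref{cor:witnes} and Lemma \ref{lem:2.1.f} yields a set of $k$ rows $R=\{r_1,\dots,r_k\}$ and an ordering $J=\{j_1,\dots,j_k\}$ for which $M:=A[R,J]$ is in the triangular form \eqref{eq:trgform}: ones on the diagonal, zeros strictly above it. The PR-statement then becomes: find $x\in J$ so that the columns of $(J-x)+p$ again contain a $k\times k$ nonsingular submatrix (after which Corollary \ref{cor:witnes} returns us to $\tH$). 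We may assume $p\notin J$, the case $p\in J$ being trivial.

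The strategy is to reuse the rows $R$ and insert $p$ in place of a single $j_s$ while preserving triangularity. Let $w:=A[R,p]$ be the restriction of column $p$ to the rows $R$, and split into two cases according to whether $w$ is the zero vector. If $w=0$, then since $v_p\neq 0$ there is a row $r\notin R$ with $A[r,p]=1$; appending $r$ to $R$ and $p$ to $J$ produces a $(k+1)\times(k+1)$ matrix of block form $\left(\begin{smallmatrix} M & 0 \\ * & 1\end{smallmatrix}\right)$, which is already triangular and hence nonsingular. Thus $J+p\in\tH$, and axiom HT2 of Definition \ref{def:hereditary} forces $J-x+p\in\tH$ for every $x\in J$, so PR holds outright.

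In the remaining case $w\neq 0$, the heart of the matter is choosing the right column to evict. I would set $t:=\min\{\,i : w_i=1\,\}$, take $x=j_t$, and place $p$ into the $t$-th column slot. The verification is that $A[R,(J-j_t)+p]$, with columns ordered $j_1,\dots,j_{t-1},p,j_{t+1},\dots,j_k$, is again of the form \eqref{eq:trgform}: the diagonal entry in slot $t$ is $w_t=1$, while the entries of $p$ lying strictly above that slot are $w_1,\dots,w_{t-1}$, all zero by minimality of $t$; every other column retains the zeros-above-diagonal it had in $M$. Hence this matrix is nonsingular and $(J-j_t)+p\in\tH$, giving PR with $x=j_t$.

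I expect the only real obstacle to be precisely this choice of $x$ in the second case. A naive pick — say the bottom-most support row of $w$ — can create a second admissible diagonal and push the permanent up to the ghost $\1$, so the swapped matrix becomes singular even though the original was not. The decisive observation is that evicting the column attached to the \emph{top-most} support row of $w$ puts all of $p$'s remaining ones strictly below the new diagonal, which is exactly what \eqref{eq:trgform} permits; once this is seen the triangularity check is immediate, and everything else is routine bookkeeping with Lemma \ref{lem:2.1.f} and Corollary \ref{cor:witnes}.
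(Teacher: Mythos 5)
Your proof is correct, and the choice of the \emph{top-most} support row of $w$ (preserving the triangular form \eqref{eq:trgform} on the same witness rows) is exactly the right exchange step; the degenerate cases ($p\in J$, and $w=0$ handled via HT2 after enlarging the witness) are also dealt with properly. Note that the paper itself does not prove this statement but quotes it from \cite[Theorem 5.3]{IJmat}, and your argument is essentially the one given there: reduce a witness of $J$ to triangular form via Lemma \ref{lem:2.1.f}, then swap $p$ into the column slot of the first row where it carries a $1$.
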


\subsection{Representation of hereditary collection}\label{sec:21.}

 Any $m \times n$  superboolean matrix $A$ gives rise to a hereditary collection
  $\H(A)$ constructed in the
following way: we label uniquely the columns of~$A$ (realized as
vectors in $\sbool^{(m)}$)  by a set $E$, $|E| = n$, the
independent subsets $\tH := \tH(A)$ of $\H$ are then subsets of
$E$ corresponding to column subsets of~$A$ that are linearly
independent in~$\sbool^{(m)}$, cf. Definition~\ref{def:tropicDep}.
Having Corollary \ref{cor:witnes}, the independent subsets of
$\H(A)$ can be described equivalently by using nonsingular
submatrices, which we call \textbf{witnesses}:
 \boxtext{
$\begin{array}{llll}
 \text{WT:} & Y \in  \tH(A) & \iff & \exists X \subseteq
\Rw(A) \text{ with } |X| = |Y|    \\[1mm]
& & & \text{such that $\clrw{A}{Y}{X}$  is nonsingular.}
\end{array}
$}
 We call $\H(A)$  an
\textbf{$\sbool$-vector hereditary collection}, and say that it is
a $\bool$-vector hereditary collection when $A$ is a boolean
matrix, cf. \cite[Definition 4.3]{IJmat}.

A hereditary collection $\H'$  is
\textbf{superboolean-representable}, written
${\sbool}$\textbf{-representable}, if it is isomorphic
(cf.~Definition~\ref{def:HCIso}) to an $\sbool$-vector hereditary
collection $\H(A)$ for some superboolean matrix $A$, and write
$A(\H)$ for an ${\sbool}$\textbf{-representation} of $\H$. When
the matrix $A(\H)$ is boolean, we call this representation a
\textbf{boolean representation}, written
${\bool}$\textbf{-representation}, and say that $\H$ is
${\bool}$\textbf{-representable}.

\begin{theorem}[{\cite[Theorem 4.6]{IJmat}}]
Any hereditary collection is superboolean-representable.
\end{theorem}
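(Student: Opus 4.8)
The plan is to construct an explicit superboolean matrix $A = A(\H)$ whose columns are labelled by $E$ (via the identity bijection) and to prove directly that $\tH(A) = \tH$; by Definition~\ref{def:HCIso} this gives $\H(A) \cong \H$, hence superboolean-representability. I index the rows of $A$ by the pairs $(B,b)$, where $B$ ranges over the bases (maximal independent sets) $\tB(\H)$ of $\H$ and $b \in B$, and I set, for each column $e \in E$,
\[
A[(B,b),\, e] \;=\;
\begin{cases}
1 & \text{if } e = b,\\
0 & \text{if } e \in B \sm \{b\},\\
\1 & \text{if } e \notin B.
\end{cases}
\]
The shape of the rule is dictated by two competing demands: independent sets must acquire a triangular (here, even identity) witness in the sense of Lemma~\ref{lem:2.1.f}, while dependent sets must be forced into the ghost ideal $\tGz$. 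The ghost entries placed on the columns lying outside a basis are exactly what reconcile the two.

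For the forward inclusion $\tH \subseteq \tH(A)$, suppose $Y \in \tH$. Extend $Y$ to a maximal independent set, fix a basis $B \supseteq Y$, and list $Y = \{y_1,\dots,y_k\}$. Restricting $A$ to the columns $Y$ and to the rows $(B,y_1),\dots,(B,y_k)$ produces, by the entry rule, the $k\times k$ identity matrix: the diagonal entries are $1$, and every off-diagonal entry is $0$ because the corresponding $y_j$ lies in $B\sm\{y_i\}$. This submatrix is nonsingular, so by Corollary~\ref{cor:witnes} the columns $Y$ are independent, i.e. $Y \in \tH(A)$.

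For the reverse inclusion I show that every circuit $C$ of $\H$ indexes a dependent column set; since independence of columns is inherited by subsets, dependence is inherited by supersets, so any $Y \notin \tH$ (which contains a circuit) is then dependent. Take $\alpha = \one$ supported on $C$ and examine the coordinate of $\sum_{e \in C} v_e$ at a fixed row $(B,b)$: summands with $e \notin B$ each contribute a ghost $\1$, while summands with $e \in B$ contribute $1$ (if $e=b$) or $0$. The key observation is that a circuit is never contained in a basis, for if $C \subseteq B$ then HT2 would give $C \in \tH$, contradicting that a circuit is dependent. Hence $C \sm B \neq \emptyset$ for every basis $B$, so a ghost summand is always present; since $\1$ is additively absorbing ($\1 + x = \1$ for all $x \in \sbool$), the whole coordinate equals $\1$. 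Thus $\sum_{e \in C} v_e \in \tGz$ and $C$ is dependent.

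Combining the two inclusions yields $\tH(A) = \tH$. The step I expect to be the crux is this reverse inclusion: the reduction to circuits together with the fact that no circuit fits inside a basis is precisely what guarantees a ghost entry in every row, and is the place where the extra element $\1$ of $\sbool$ (absent from $\bool$) is essential. Two degenerate cases should be checked separately and are immediate: a loop (an element lying in no basis) yields an all-ghost column, correctly dependent, and $\tH = \{\emptyset\}$ leaves $A$ with no rows, in which case every nonempty column set is vacuously ghost, again as required.
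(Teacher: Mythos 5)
Your construction is correct. Note that the paper under review does not actually prove this statement---it quotes it from \cite{IJmat} (Theorem 4.6 there)---but your matrix, with rows indexed by pairs $(B,b)$ for $B$ a basis and $b\in B$, an identity pattern on each basis block, and ghost entries $\1$ off the basis, is essentially the construction of that reference, and both of your inclusions ($\tH\subseteq\tH(A)$ via identity witnesses, $\tH(A)\subseteq\tH$ via the absorbing ghost) are sound. The only inessential detour is your reduction to circuits: since any $Y\notin\tH$ is contained in no basis (by HT2), the all-ones combination of the columns of $Y$ already has a ghost summand in every row, so dependence follows directly without isolating a circuit.
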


Thus, the natural question becomes: which hereditary collections
are boolean representable?

\section{Matroids and their flat lattices} To make this paper
reasonable   self contained, we open with some classical
definitions and results about matroids and their lattice of flats,
see \cite{murota,Oxley03whatis,oxley:matroid,White2}.

\begin{definition}\label{def:matroid}
A  \textbf{matroid} $\M := (E, \tH)$ is hereditary collection
 that also satisfies the following exchange axiom:
 \boxtext{
\begin{enumerate} \eroman
%    \item[M1:]  $\tH$ is nonempty. \pSkip

    \item[MT:] If $X$ and $Y$ are in $\tH$ and $|Y| = |X| + 1$, then there exists $
    y \in Y \sm X$ such that $X +y $ is in $\tH$.
\end{enumerate}}
\end{definition}

A single element $x \in E$ that forms a circuit of $\M := (E,
\tH)$, or equivalently it belongs to no basis, is called a
\textbf{loop}. Two elements $x$ and $y$ of $E$ are said to be
\textbf{parallel}, written $x \prll y$, if the $2$-set $\{x, y \}$
is a circuit of $\M$. A matroid is called \textbf{simple} if it
has no circuits consisting of $1$ or $2$ elements, i.e.,  has no
loops and no parallel elements.
 This is equivalent to all subsets with $2$ or less elements are
 independent.

The \textbf{closure} $\clos(X)$ of a subset  $X \subseteq E$ is
the subset of $E$ containing $X$ and every element $y \in
E\setminus X$ for which  there is a circuit $C \subseteq X + y$
containing $y$. This defines a closure operator $\clos: \Pow(E)
\to \Pow(E)$ which has the \textbf{Mac Lane-–Steinitz exchange
property}:
$$ \text{For any  $x, y \in E$ and all  $Y \subseteq E,$ if $x \in
\clos(Y + y ) \setminus \clos(Y)$, then $y \in \clos(Y + x)$.}$$

A subset $X \subseteq E$ is said to be \textbf{closed}, also
called a \textbf{flat}, if $X = \clos(X)$. The closed subsets of a
matroid satisfy the following properties.

% are characterized by a covering partition property:
\begin{enumerate} \ealph
    \item
The whole ground  set $E$ is closed. \pSkip

\item If $X$ and $Y$ are closed, then the intersection $X \cap Y$ is closed.
\pSkip

\item If $X$ is a flat, then the flats $Y$ that \textbf{cover} $X$, i.e.,
$Y$ properly contains $X$ without any flat $Z$ between $X$ and
$Y$, partition the elements of $E \setminus X$.

\end{enumerate}

The following proposition includes all the properties of flats we
will need later in this paper.

\begin{proposition}\label{rmk:closBasis} For any matroid $\M := (E,
\tH)$ we have:
\begin{enumerate} \eroman
    \item
 $\clos(B) = E$ for any basis
$B \in \tB(\M)$ of $\M$. \pSkip

% \item  A subset $X \subseteq E$ is
% dependent   iff  $\clos(X - x)  =  X$ for every  $x \in X$.
% (Easily seen, since $X$ has a circuit containing $x$ for every $x
% \in X$.) \pSkip

% \item  When  $X \in \tH$ is independent, then
%  $\rnk{\clos(X)} = \rnk{X}$ since otherwise there would be   $y
% \in \clos(X)$ such that $X +  y$  is independent -- contradicting
% the existence of  circuit $C \subseteq X +y$ containing~$y$.

\item  If $C$ is  a circuit of a matroid, then, for all $c$ in $C$,  $c$ is
a member of  $\clos(C - c)$. \pSkip

\item  $X$ is independent in a matroid iff  $x$ is
not a member of $\clos(X - x )$ for all $x \in X$.  \pSkip

\item $Y$ is dependent in a matroid iff there exists an element $y \in Y$ such that $y$ is a member of  $\clos( Y - y)$.  \pSkip

\item If $\M :=  (E, \tH)$  and $\M' := (E, \tH')$ are matroids, and no
circuit of $\M'$ lies in $\tH$, then and only then,  $\tH$ is a
subset of $\tH'$.

\end{enumerate}
\end{proposition}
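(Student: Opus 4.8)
The plan is to derive all five statements directly from the definition of the closure operator $\clos$, the heredity axiom HT2, and the minimality (maximality) defining circuits (bases); no appeal to the exchange axiom MT or to the Mac Lane--Steinitz property is needed. Throughout I would unwind the definition of closure in the following form: for $x \notin X$, one has $x \in \clos(X)$ precisely when some circuit $C$ satisfies $x \in C \subseteq X + x$.

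For (i), let $B$ be a basis and take $y \in E \setminus B$. Since $B$ is a maximal independent set, $B + y$ is dependent and hence contains a circuit $C$; this $C$ must contain $y$, for otherwise $C \subseteq B$ would be a dependent subset of the independent set $B$, contradicting HT2. Thus $y \in C \subseteq B + y$, so $y \in \clos(B)$, and $\clos(B) = E$ follows. Statement (ii) is then immediate: if $C$ is a circuit and $c \in C$, then $C$ itself is a circuit with $c \in C \subseteq (C - c) + c$, which witnesses $c \in \clos(C - c)$.

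For (iii) I would argue both implications by contradiction, using HT2 in its contrapositive form, namely that any set containing a dependent subset is itself dependent. If $X$ is independent yet $x \in \clos(X - x)$ for some $x \in X$, then a circuit $C$ with $x \in C \subseteq X$ is a dependent subset of $X$, forcing $X$ to be dependent, a contradiction. Conversely, if $X$ were dependent it would contain a circuit $C$; choosing any $c \in C$ gives $c \in C \subseteq X = (X - c) + c$, whence $c \in \clos(X - c)$, contradicting the hypothesis. Statement (iv) is precisely the logical negation of (iii): $Y$ is dependent exactly when the assertion ``$y \notin \clos(Y - y)$ for all $y \in Y$'' fails, i.e.\ when some $y \in Y$ lies in $\clos(Y - y)$.

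Finally, (v) again rests only on HT2. If no circuit of $\M'$ lies in $\tH$ and $X \in \tH$, then $X$ cannot contain an $\M'$-circuit, since such a circuit would be a member of $\tH$ by HT2; hence $X$ is independent in $\M'$, giving $\tH \subseteq \tH'$. For the converse, every circuit $C'$ of $\M'$ is dependent in $\M'$, so $C' \notin \tH'$; if $\tH \subseteq \tH'$ then $C' \notin \tH$, so no $\M'$-circuit lies in $\tH$. The individual arguments are all short, so the main thing to watch is the consistent use of HT2 in its contrapositive form, which is exactly what drives the circuit-based steps in (i), (iii), and (v).
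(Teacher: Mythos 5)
Your proposal is correct and follows essentially the same route as the paper's own proof: both derive all five parts from the circuit-based definition of $\clos$, the heredity axiom HT2, and the minimality/maximality conditions defining circuits and bases, with no appeal to the exchange axiom. The only difference is one of detail---the paper's proof is terse (parts (ii) and (v) are dismissed as immediate from the definitions), whereas you spell out the witnessing circuits and the contrapositive uses of HT2 explicitly.
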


\begin{proof} $ $  \begin{enumerate} \eroman
\item[(i):] By maximality of independence, for each $x \in E \sm B$,
the subset $B +  x$ has  a circuit containing~$x$. \pSkip

\item[(ii):] Clear from the definition of the closure $\clos( X  )$.
\pSkip

\item[(iii):]$X$ independent clearly implies that  $x$ is not a member of
$\clos( X - x)$, since $X$ can not contain a circuit. Also $X$ is
dependent iff $X$ contains a circuit $C$, so choosing $c$ in $C$,
implies that $c$ is a member of $\clos( X - c )$. \pSkip

\item[(iv):] The proof is logically equivalent to that of (iii). \pSkip

\item[(v):]  The statement is logically equivalent to the definition of
circuit.
\end{enumerate}
\end{proof}

The ``smaller" flats of simple matroids are easily determined:

\begin{remark}\label{rmk:singletons} When a matroid $\M:= (E, \tH)$ is simple,
the singleton  $\{ x \}$ is a flat for every $x \in E$, while
$\emptyset$ is the smallest flat (with respect to inclusion)  of
$\M$.
\end{remark}

 The class  of all flats of a simple matroid $\M$, partially ordered by set inclusion,
 forms a \textbf{\mfl}, denoted as $\Lat{\M}$,
 %
% This lattice  is a \textbf{distributive lattice}, i.e.,  $$\ell
% \wedge (m \vee m') = (\ell \wedge m ) \vee (\ell \wedge m'),
% \qquad \text{for all } \ell,m,m' \in \Lat{\M},$$
having the \textbf{top element} $T = E$ and the \textbf{bottom
element} $B = \emptyset$. The height $\hgt{\ell}$ of a lattice
element $\ell \in \lat $ is defined to be the length of the
maximal chain from $B$ to $\ell$. A lattice element of height $1$
counting edges, i.e., it covers the bottom element, is called an
\textbf{atom}.

%The set $\tA := \Atom{\lat }$ of atoms of any \mfl \  $\lat :=
%\Lat{\M}$ determine a matroid under the  closure operator $\clos:
%\Pow(E) \to \Pow(E)$ given by
%$$\clos(K_{\leq m}) = \{\ell  \in  \tA \ds | \ell \leq m \},$$
%for a subset $K_{ \leq m} \subseteq \tA$ whose join in $\lat $ is
%$m$. Equivalently, the flats of the matroid are the down-subsets
%$D_{\leq m} := \{\ell \in \tA \ds | \ell  \leq m  \}$ for $m \in
%\lat $. Thus, the lattice of flats of this matroid is naturally
%isomorphic to $\lat $.

 A finite lattice
 $\lat $ is \textbf{semimodular} if it satisfies the following
 conditions:
 \begin{enumerate} \ealph
    \item   For every pair $\{\ell,m\}$ with $\ell < m $ all the
 chains  from $\ell$ to $m$ have the same length (called the \textbf{Jordan-Dedekind chain
condition}); \pSkip
    \item $ \hgt{\ell} +  \hgt{m} \geq  \hgt{\ell \vee m} +  \hgt{\ell \wedge m},  \ \text{ for any } \ell,m \in \lat .$
 \end{enumerate}
A \textbf{geometric lattice} is a semimodular lattice in which
every element is a join of atoms.
% The geometric lattice of a matroid is denoted $\GLat{\M}$

\begin{lemma}[{\cite[Lemma 1.7.3]{oxley:matroid}}]\label{lem:oxly}
In a \mfl \ $\Lat{\M}$, for all flats $X,Y$ of $\M$
$$ X \wedge Y = X \cap Y \qquad \text{and} \qquad X \vee Y = \clos(X \cup Y).$$
\end{lemma}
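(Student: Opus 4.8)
The plan is to verify the two identities separately, each as an instance of the general principle that in a lattice of closed sets the meet is the set-theoretic intersection and the join is the closure of the union. The underlying data I will exploit are: the lattice $\Lat{\M}$ is ordered by inclusion, so that $\wedge$ and $\vee$ are the greatest lower bound and least upper bound \emph{among flats}; property (b) above, namely that the intersection of two flats is again a flat; and the fact that $\clos$ is a closure operator (extensive, monotone, and idempotent), so that $\clos(S)$ is a flat for every $S \subseteq E$.

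For the meet, I would first note that $X \cap Y$ is itself a flat by property (b). Since $X \cap Y \subseteq X$ and $X \cap Y \subseteq Y$, it is a lower bound of $\{X,Y\}$ in $\Lat{\M}$. Conversely, if $Z$ is any flat with $Z \subseteq X$ and $Z \subseteq Y$, then $Z \subseteq X \cap Y$ purely set-theoretically; hence $X \cap Y$ is the \emph{greatest} lower bound, i.e. $X \wedge Y = X \cap Y$.

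For the join, I would first observe that $\clos(X \cup Y)$ is a flat, by idempotency of the closure operator. Extensivity gives $X \subseteq X \cup Y \subseteq \clos(X \cup Y)$, and likewise for $Y$, so $\clos(X \cup Y)$ is an upper bound of $\{X,Y\}$. To see it is the \emph{least} such, let $Z$ be any flat with $X \subseteq Z$ and $Y \subseteq Z$; then $X \cup Y \subseteq Z$, and applying monotonicity of $\clos$ together with $\clos(Z) = Z$ (as $Z$ is a flat) yields $\clos(X \cup Y) \subseteq \clos(Z) = Z$. Thus $\clos(X \cup Y)$ is the least upper bound, i.e. $X \vee Y = \clos(X \cup Y)$.

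The only genuinely nontrivial point is the claim that $\clos(X \cup Y)$ is a flat, which rests on the idempotency $\clos(\clos(S)) = \clos(S)$; this is exactly part of the assertion, made when $\clos$ was introduced, that $\clos$ is a closure operator. Everything else is a routine verification of the universal properties of $\wedge$ and $\vee$ against the inclusion order, so I expect no further obstacle.
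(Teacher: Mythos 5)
Your proof is correct, and it is essentially the standard argument: the paper itself gives no proof of this lemma, quoting it directly from Oxley (Lemma 1.7.3), and your verification of the universal properties---$X \cap Y$ is a flat by closure under intersection and is the greatest lower bound; $\clos(X \cup Y)$ is a flat by idempotency and is the least upper bound by extensivity, monotonicity, and $\clos(Z)=Z$ for flats $Z$---is precisely the textbook proof being cited. Note also that by exhibiting the meet and join of an arbitrary pair of flats, your argument simultaneously establishes that the set of flats ordered by inclusion is in fact a lattice, which is the implicit content of calling $\Lat{\M}$ a \mfl; no gap remains.
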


\begin{theorem}[{\cite[Theorem 1.7.5]{oxley:matroid}}]
A lattice $\lat $ is geometric iff it is the lattice of flats of a
matroid, i.e., a \mfl.
\end{theorem}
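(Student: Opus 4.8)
The plan is to prove the two implications separately, the key bridge in both directions being the identification of the lattice height $\hgt{\cdot}$ with the matroid rank $\rnk{\cdot}$.

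For the implication that every \mfl\ $\Lat{\M}$ is geometric, I would first argue that for each flat $X$ every maximal chain of flats from $B = \emptyset$ to $X$ has length exactly $\rnk{X}$: each cover $X_{i-1} \lessdot X_i$ in $\Lat{\M}$ raises the matroid rank by precisely one, so $\hgt{X} = \rnk{X}$ and the Jordan--Dedekind condition (a) holds. Condition (b) then follows from the submodularity of the matroid rank function together with Lemma~\ref{lem:oxly}: since $\clos$ preserves rank and $X \vee Y = \clos(X \cup Y)$, $X \wedge Y = X \cap Y$, the inequality $\rnk{X} + \rnk{Y} \ge \rnk(X \cup Y) + \rnk(X \cap Y)$ translates directly into $\hgt{X} + \hgt{Y} \ge \hgt{X \vee Y} + \hgt{X \wedge Y}$. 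Finally, by Remark~\ref{rmk:singletons} the atoms of $\Lat{\M}$ are exactly the singleton flats $\{x\}$, and writing any flat as $X = \clos(\{x_1, \dots, x_k\})$ for an independent generating set exhibits $X = \bigvee_i \{x_i\}$ as a join of atoms.

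For the converse, given a geometric lattice $\lat$ I would build a matroid on the ground set $E := \Atom{\lat}$ by declaring a subset $X \subseteq E$ \textbf{independent} precisely when $\hgt{\bigvee X} = |X|$. The semimodular inequality (b) applied to an element and an atom gives $\hgt{\ell \vee a} \le \hgt{\ell} + 1$, from which HT1 and HT2 follow: removing an atom lowers the height of the join by at most one, so the defining equality is inherited by subsets. For the exchange axiom MT, given independent $X, Y$ with $|Y| = |X| + 1$, not every atom of $Y$ can lie below $\bigvee X$ (else $\hgt{\bigvee Y} \le \hgt{\bigvee X} < |Y|$), so choose $y \in Y$ with $y \not\le \bigvee X$; then $(\bigvee X) \vee y$ strictly exceeds $\bigvee X$, forcing $\hgt{\bigvee(X + y)} = |X| + 1$, so $X + y$ is independent, and $y \in Y \sm X$. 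The remaining task is to exhibit a lattice isomorphism $\Lat{\M} \cong \lat$ via $\ell \mapsto \{a \in E : a \le \ell\}$: since every element is the join of the atoms below it, this map is injective and reflects order, and one identifies its image with the flats of $\M$ by checking that the matroid closure of a set of atoms $X$ equals $\{a : a \le \bigvee X\}$.

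The main obstacle I anticipate is the converse direction, specifically the verification of the exchange axiom MT and the proof that $\ell \mapsto \{a \le \ell\}$ is a bijection onto the flats: these require translating the order-theoretic semimodular inequality into the combinatorial rank estimates, and confirming via the characterizations in Proposition~\ref{rmk:closBasis} that a set of atoms is closed (a matroid flat) exactly when it has the form $\{a : a \le \ell\}$ for some $\ell \in \lat$. The rest of the argument is routine bookkeeping once height and rank have been matched.
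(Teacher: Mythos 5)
The paper does not actually prove this statement: it is imported verbatim from Oxley \cite[Theorem 1.7.5]{oxley:matroid} as background, so there is no internal argument to compare yours against. Your sketch is, in substance, the classical textbook proof that the citation points to, and both directions are sound as outlined: the forward direction via the identification $\hgt{X}=\rnk{X}$ on flats (covers raise rank by one, giving Jordan--Dedekind; submodularity of the matroid rank plus Lemma~\ref{lem:oxly} giving semimodularity; Remark~\ref{rmk:singletons} giving atoms, consistent with the paper's convention that $\Lat{\M}$ is defined for \emph{simple} matroids, and indeed the matroid you construct in the converse is automatically simple), and the converse via independence of atom sets defined by $\hgt{\bigvee X}=|X|$. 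The one step you defer to the end --- that $\ell \mapsto \{a \in \Atom{\lat} : a \le \ell\}$ is a bijection onto the flats, equivalently that the matroid closure satisfies $\clos(X)=\{a : a \le \bigvee X\}$ --- is genuinely the crux of the converse, but it closes with a tool you have already built: your exchange argument shows that if $B$ is a maximal independent subset of a set of atoms $X$, then no atom of $X$ can fail to lie below $\bigvee B$ (else $B+a$ would be independent), so $\bigvee B = \bigvee X$ and the rank of $X$ in the constructed matroid is $\hgt{\bigvee X}$; hence an atom $a$ has $\rnk{X+a}=\rnk{X}$ iff $a \le \bigvee X$, which is exactly the claimed description of the closure, and geometricity (every element is a join of atoms) then makes the map surjective onto the flats and an order isomorphism. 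The only other assertion worth a line of its own is the forward direction's claim that covers in $\Lat{\M}$ raise rank by exactly one; it follows from the covering-partition property of flats that the paper lists just before Proposition~\ref{rmk:closBasis}.
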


\begin{corollary}\label{cor:geomlat}
Every element of the lattice of flats of a matroid is
join-generated by atoms.
\end{corollary}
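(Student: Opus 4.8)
The plan is to read this off immediately from the preceding theorem together with the definition of a geometric lattice. By that theorem the lattice of flats $\Lat{\M}$ of a matroid is precisely a geometric lattice, and a geometric lattice is \emph{by definition} a semimodular lattice in which every element is a join of atoms. So the statement is nothing more than the join-of-atoms clause of that definition, specialized through the equivalence, and at the level of the abstract lattice there is nothing further to prove.

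A cleaner and more self-contained route, which avoids invoking the full equivalence theorem and works directly with the closure operator, is the one I would actually write out. Assuming $\M$ is simple---so that by Remark~\ref{rmk:singletons} each singleton $\{x\}$, $x \in E$, is a flat, and these are exactly the atoms of $\Lat{\M}$ (any flat strictly between $\emptyset$ and $\{x\}$ is impossible, and any atom $A$ contains some $x$ with $\{x\}\subseteq A$, forcing $A=\{x\}$ by the covering property)---I would fix an arbitrary flat $F$ and show $F = \bigvee_{x \in F}\{x\}$, the join running over precisely the atoms lying below $F$.

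The join is evaluated using Lemma~\ref{lem:oxly}, which gives $X \vee Y = \clos(X \cup Y)$ for flats $X,Y$. First I would promote this from two flats to a finite family by induction, using idempotence and monotonicity of $\clos$ (so that $\clos(\clos(A)\cup B) = \clos(A \cup B)$); since $E$, hence $F$, is finite, the join $\bigvee_{x \in F}\{x\}$ is a finite join and this applies. Then
$$ \bigvee_{x \in F}\{x\} \;=\; \clos\Big(\bigcup_{x \in F}\{x\}\Big) \;=\; \clos(F) \;=\; F, $$
the last equality holding because $F$ is a flat, i.e. closed. Hence every flat is join-generated by atoms.

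The only obstacle here is bookkeeping rather than substance: the short induction that lifts Lemma~\ref{lem:oxly} to arbitrary finite joins, and the degenerate bottom element $\emptyset$, which is the empty join of atoms and should be acknowledged as vacuously join-generated. Beyond that, the argument is a direct unwinding of the definitions of flat and of the lattice operations $\vee$ and $\clos$.
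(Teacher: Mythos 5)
Your proposal is correct, and your first paragraph is in fact the paper's entire proof: the corollary carries no separate argument in the text because it is just the join-of-atoms clause in the definition of a geometric lattice, transported through the immediately preceding theorem (Oxley's Theorem 1.7.5, that a lattice is geometric iff it is a \mfl). Your second, direct argument is a genuinely different and more self-contained route: it bypasses the equivalence theorem entirely and uses only Lemma~\ref{lem:oxly} plus idempotence and monotonicity of $\clos$, computing $\bigvee_{x \in F}\{x\} = \clos\big(\bigcup_{x\in F}\{x\}\big) = \clos(F) = F$ for each flat $F$, with the bottom flat $\emptyset$ as the empty join. What the paper's route buys is brevity, at the price of hiding all the work inside the cited equivalence (whose proof is itself nontrivial); what your route buys is transparency about exactly which facts are used --- that $\vee$ is closure of union, that flats are closed, and, via Remark~\ref{rmk:singletons}, that in a simple matroid the singletons are precisely the atoms of $\Lat{\M}$. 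The one caveat in your version is that simplicity hypothesis: for a non-simple matroid the atoms of the lattice of flats are the rank-one flats $\clos(\{x\})$ rather than singletons, so your computation would need a minor adjustment (join the atoms $\clos(\{x\})$ for $x \in F$ not a loop); this is harmless in context, since the paper defines the \mfl\ only for simple matroids and later declares all matroids to be assumed simple.
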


\begin{remark}\label{rmk:Atoms}
In a \mfl \ $\lat  := \Lat{\M}$, with $\M := (E,\tH)$ a simple
matroid, every element $x \in E$ is closed and thus appears as an
atom $\{ x \}$ in $\lat $. Thus, there is a one-to-one
correspondence between the elements of $E$ and the atoms of~$\lat
$.
\end{remark}

 \section{Representation of lattices and partitions}

Unless otherwise is specified,  in this paper we always assume
\textbf{all lattices are finite} lattices, but almost all the
results  generalize easily to the infinite case.

 \subsection{Lattice representation}\label{ssec:LatRep}
 Within  this part of the paper, when working with lattices, we realize a matrix as a semi-module
 (see \cite[\S8,\S9]{qtheory}, called there a boolean module),  sup-generated
by the matrix rows (or columns); therefore, as explained below,
considering lattice representations we work row-wise.

Given a finite lattice $\lat  := (L, \leq)$, where $|L| = m$, we
define the $m \times m $ boolean matrix $\stc{A}(\lat ) :=
(a_{i,j})$, which we called the \textbf{structure matrix of $\lat
$}, by the rule
\begin{equation}\label{eq:matRelation}
 a_{i,j} := \left \{
\begin{array}{ll}
  1 & \text{if }  \ell_i \ds \leq \ell_j, \\[1mm]
  0 & \text{otherwise}. \\
\end{array}
\right.
\end{equation}
Accordingly, such a structure matrix has the proprieties:
\begin{enumerate} \ealph
    \item $a_{i,i} = 1$ for every $i = 1,\dots, n$, by
    reflexivity of $\lat $; \pSkip
    \item $a_{i,j} = 1$ iff $a_{j,i} =0$  for any $i \neq j$, by antisymmetry of $\lat $.
\end{enumerate}
Clearly, using the setting \eqref{eq:matRelation}, the structure
of a lattice $\lat $ is uniquely recorded by the matrix
$\stc{A}(\lat )$ and vise versa. Therefore, we identify the
lattice $\lat $ with the structure matrix $A := \stc{A}(\lat )$.
This leads us to the next two key definitions, playing a major
role in our representation theory.

%\begin{remark}\label{rmk:reversing} Let ${\rvs{L}} := (L, \geq)$
%be the lattice obtained from $\lat  := (L,\leq)$ by reversing of
%order. Then $\stc{A}(\rvs{\lat }) = \trn{\stc{A}(\lat )}$.
%\end{remark}

The reason that ``c''  occurs in the next definitions is basically
because boolean modules are separative (the dual space of all
sup-maps into the boolean semiring $\bool$ separates points) and
the dual space is isomorphic to the original module
 with the order reversed, see \cite[Chapter 9]{qtheory}.
 The same idea  of passing to $c$ is used in \cite{IJdim}

 Also the matrix  $\stc{A}$ is triangular
with ones on the diagonal, namely is nonsingular in our sense
($\per{\stc{A}} = \one$)  which in the field sense  is invertible
(i.e., $\det (\stc{A})$), the basis of Rota's Mobius Inversion
Theorem. Therefore,  if $\stc{A}$ was used instead of applying
$c$, all subsets of lattice elements not containing the bottom
would be independent, clearly the wrong choice.

\begin{kdefinition}\label{kdef:latrep}
 The \textbf{boolean representation} $\cmp{A} := \cmp{A}(\lat)$ of a
 finite lattice
$ \lat := (L, \leq)$ is defined as
$$ \cmp{A}(\lat ) := \cmp{(\stc{A}(\lat ))},$$
also written as $\cmp{A} := (\cmp{a}_{i,j})$, cf. Definition
\ref{def:matComp}. \end{kdefinition}

This novel construction of boolean representation of lattices
leads naturally to the following fundamental notions:

\begin{kdefinition}\label{kdef:crank}
 The \textbf{\nook} of a subset $W \subseteq L
$ is then given by
$$ \nk{W} := \rnk{\rw{\cmp{A}}{W}}, \qquad \cmp{A} := \cmp{A}(\lat),$$ where  $\rw{\cmp{A}}{W}$ stands
for the rows of $\cmp{A}$ corresponding to the subset $W$, cf.
Notations \ref{nott}. We say that a subset $W \subseteq L$ is
\textbf{\nookind} if the rows $\rw{\cmp{A}}{W}$ of the matrix
$\cmp{A}$ are independent in the sense of Definition
\ref{def:tropicDep}, that is $\nk{\rw{\cmp{A}}{W}}= |W|$;
otherwise we say that $W$ is \textbf{\nookdep}.
\end{kdefinition} It easy to verify that by this definition that \nook \ is some sort of  rank function,
which always satisfies the relation
    $$\nk{W}  \ds \leq \nk{\lat } \ds \leq |L|, $$  $\text{for every } W \subseteq L.$
Actually, what the exact axioms are for this rank function is an
important open research problem.

When a subset $W \subseteq L$  with $|W| =k$ is independent, the
rows $\rw{\cmp{A}}{W}$ of the representation $\cmp{A} : =
\cmp{A}(\lat)$ contain a $k \times k $ nonsingular submatrix
$\clrw{\cmp{A}}{U}{W}$ with $U \subseteq \lat$, where $|U| = k$
(cf. Theorem \ref{thm:rnkSing}), which we call a \textbf{witness}
of $W$ (in~$\cmp{A}$). Abusing terminology, we also say that $U$
is a witness of $W$ in $\lat $. Permuting independently the rows
and columns of a witness, it has the triangular Form
\eqref{eq:trgform}, cf. Lemma \ref{lem:2.1.f}.

\begin{note}
Although the work with matroids is performed  column-wise, when
considering lattices, in order to be compatible with the order,
recorded by structure matrix, cf. \eqref{eq:matRelation}, we have
 adopted a  row-wise approach. As will be seen later, when working
with the \mfl, this approach fits well with the column-wise
representations of  matroid.
\end{note}

Aiming to establish the correspondence between the height and the
\nook \ of lattices, we need the next lemmas.
\begin{lemma}\label{lem:chain-ind}
Any strict chain $\ell_1 <   \cdots < \ell_k$, where $B < \ell_1$,
of a lattice $\lat := (L, \leq)$ determines a \nookind \ subset $W
:= \{\ell_1, \dots, \ell_k \}$ in $\lat$.
\end{lemma}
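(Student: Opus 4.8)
The plan is to exhibit an explicit witness for $W$. By Corollary~\ref{cor:witnes}, the set $W$ is \nookind\ precisely when the $k$ rows $\rw{\cmp{A}}{W}$ contain a $k \times k$ nonsingular submatrix; equivalently, by Lemma~\ref{lem:2.1.f}, it suffices to produce a set $U = \{u_1, \dots, u_k\}$ of $k$ column indices for which the submatrix $\clrw{\cmp{A}}{U}{W}$ can be permuted into the triangular Form~\eqref{eq:trgform}. Recall from Definition~\ref{kdef:latrep} together with \eqref{eq:matRelation} that $\cmp{a}_{i,j} = 1$ iff $\ell_i \not\leq \ell_j$, so the entries of any candidate submatrix are governed purely by the order relation of $\lat$.

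The key step is the choice of witness. Writing $\ell_0 := B$ for the bottom element, I would take the columns indexed by $U := \{\ell_0, \ell_1, \dots, \ell_{k-1}\}$, that is $u_j := \ell_{j-1}$ for $j = 1, \dots, k$, and keep the rows in the order $\ell_1, \dots, \ell_k$. Then the diagonal entry is $\cmp{a}_{\ell_j,\ell_{j-1}} = 1$, since $\ell_{j-1} < \ell_j$ forces $\ell_j \not\leq \ell_{j-1}$; and for $i < j$ the entry is $\cmp{a}_{\ell_i,\ell_{j-1}} = 0$, since $i \leq j-1$ gives $\ell_i \leq \ell_{j-1}$ along the chain. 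Thus the submatrix is already lower triangular with all diagonal entries $1$ and all entries above the diagonal $0$; the entries below the diagonal lie in $\{0,1\}$ because $\cmp{A}$ is boolean, which is admissible. Hence $\clrw{\cmp{A}}{U}{W}$ has exactly the Form~\eqref{eq:trgform} and is nonsingular.

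Two small points must be checked to make the witness legitimate. First, the $k$ chosen columns must be distinct: the chain is strict, so $\ell_1, \dots, \ell_{k-1}$ are distinct, and the hypothesis $B < \ell_1$ guarantees that $\ell_0 = B$ differs from every $\ell_i$ (indeed $B < \ell_1 \leq \ell_i$). Second, this is precisely where the assumption $B < \ell_1$ is used: it supplies an element strictly below $\ell_1$ to serve as the first witness column $u_1$. Were $\ell_1$ itself the bottom, its row in $\cmp{A}$ would be identically zero and $W$ would be forced to be \nookdep. Granting these, $\clrw{\cmp{A}}{U}{W}$ is a $k \times k$ nonsingular submatrix of $\rw{\cmp{A}}{W}$, whence $\nk{W} = |W| = k$ and $W$ is \nookind.

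There is essentially no hard analytic obstacle here; the content of the lemma lies entirely in guessing the right witness, and the shift $u_j = \ell_{j-1}$, anchored at the bottom element, is exactly what converts the reflexive $\leq$ recorded by $\stc{A}$ into the strict ``below'' pattern that the complement $\cmp{A}$ makes visible. The only thing to be careful about is the bookkeeping with the complement convention, namely that a $1$ in $\cmp{A}$ encodes $\not\leq$ rather than $\leq$, together with the role of the hypothesis $B < \ell_1$ noted above.
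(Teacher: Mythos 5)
Your proof is correct and follows exactly the paper's own argument: the paper's proof consists precisely of exhibiting the witness $U = \{B, \ell_1, \dots, \ell_{k-1}\}$, which is the same shifted-chain witness you chose. Your verification of the triangular Form~\eqref{eq:trgform}, the distinctness of the columns, and the role of the hypothesis $B < \ell_1$ simply makes explicit the details the paper leaves to the reader.
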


\begin{proof}
 If $\ell_1 < \cdots < \ell_k$ is a chain in $\lat $,
then $ \ell_1, \dots, \ell_k$ are independent with witness $U :=
\{ m_1, \dots, m_k\}$, where $m_1 = B$, $m_2 = \ell_1, \dots, m_k
= \ell_{k-1}$.
\end{proof}

\begin{lemma}\label{lem:inf-chain}
A witness of a \nookind \ subset $W := \{\ell_1, \dots, \ell_k \}
\subseteq L$ of a % meet-closed
lattice $\lat:= (L, \leq)$ gives
rise to a strict chain of $\lat $. Detail in the proof below.
\end{lemma}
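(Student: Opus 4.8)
The plan is to unwind the witness via Lemma~\ref{lem:2.1.f} into a concrete system of order relations, and then manufacture a chain by taking successive joins of the rows indexed by $W$.

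First I would fix a witness $U \subseteq L$, $|U| = k$, so that $\clrw{\cmp{A}}{U}{W}$ is a $k \times k$ nonsingular submatrix of $\cmp{A} := \cmp{A}(\lat)$. By Lemma~\ref{lem:2.1.f}, after independently permuting its rows and columns this submatrix becomes lower triangular, with $1$'s on the diagonal and $0$'s above it. Relabelling accordingly, write $W = \{\ell_1, \dots, \ell_k\}$ and $U = \{u_1, \dots, u_k\}$ in the permuted order, so that $\cmp{a}_{\ell_i, u_j} = 1$ when $i = j$ and $\cmp{a}_{\ell_i, u_j} = 0$ when $i < j$. Recalling that $\cmp{a}_{\ell, u} = 1 \iff \ell \not\leq u$, this translates into the two families of relations
\begin{equation*}
\ell_i \not\leq u_i \quad (1 \leq i \leq k), \qquad \ell_i \leq u_j \quad (1 \leq i < j \leq k).
\end{equation*}

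Next I would form the partial joins $v_j := \ell_1 \vee \cdots \vee \ell_j$ for $j = 1, \dots, k$, which are weakly increasing by construction. To see that each step is strict, observe that for $i < j$ we have $\ell_i \leq u_j$, hence $v_{j-1} = \ell_1 \vee \cdots \vee \ell_{j-1} \leq u_j$; were $v_{j-1} = v_j$, then $\ell_j \leq v_j = v_{j-1} \leq u_j$, contradicting $\ell_j \not\leq u_j$. Thus $v_{j-1} < v_j$. Finally $\ell_1 \not\leq u_1$ forces $\ell_1 \neq B$, so $v_1 = \ell_1 > B$, and $B < v_1 < v_2 < \cdots < v_k$ is a strict chain of $\lat$ with $k$ elements above the bottom, matching $|W| = k$.

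The point I expect to require the most care is that the elements of $W$ need \emph{not} themselves form a chain: for instance, when $W$ is a set of independent atoms the rows are \nookind \ yet the atoms are pairwise incomparable. The triangular form only delivers the mixed relations displayed above, so it is the passage to the successive joins $v_j$ that converts this ``staircase'' of comparabilities into an honest strict chain, with the diagonal entries $\ell_j \not\leq u_j$ supplying the strictness and the lattice join supplying the new elements. This construction is the precise converse of Lemma~\ref{lem:chain-ind}, and combining the two should yield the equality of \nook \ and height in Theorem~\ref{thm:htnook}.
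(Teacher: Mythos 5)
Your proof is correct, but it reaches the chain by a route dual to the paper's. Both arguments start identically: put the witness $\clrw{\cmp{A}}{U}{W}$ into the triangular form of Lemma~\ref{lem:2.1.f} and translate its entries into the relations $\ell_i \not\leq u_i$ (diagonal) and $\ell_i \leq u_j$ for $i<j$ (above the diagonal). From there the paper works on the \emph{witness} side: it forms the successive meets $\tlm_j = m_j \wedge \cdots \wedge m_k$ of the column elements and gets strictness from $\ell_j \leq \tlm_{j+1}$ together with $\ell_j \not\leq \tlm_j$. You work on the $W$ side: you form the successive joins $v_j = \ell_1 \vee \cdots \vee \ell_j$ of the row elements and get strictness from $v_{j-1} \leq u_j$ together with $\ell_j \not\leq u_j$. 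The two constructions are order-duals of one another and both are valid in a finite lattice. Your version has one concrete advantage: since $\ell_1 \not\leq u_1$ forces $\ell_1 > B$, your chain $B < v_1 < \cdots < v_k$ visibly has length $k$ counting edges, so it delivers $\hgt{\lat} \geq \nk{\lat}$ in Theorem~\ref{thm:htnook} with nothing further to check; the paper's chain $\tlm_1 < \cdots < \tlm_k$ has only $k$ elements, its smallest member $\tlm_1$ may coincide with $B$, and one must add the (implicit) observation that $\ell_k \not\leq \tlm_k$ forces $\tlm_k < T$ in order to account for the $k$-th edge. Conversely, the paper's meet-based construction is the one that matches the ``meet-closed'' hypothesis as phrased in Theorem~\ref{thm:htnook} and the dual-space motivation given for passing to the complement matrix, whereas your construction implicitly uses closure under joins --- harmless here since all lattices in the paper are finite, and natural in the geometric-lattice application where everything is join-generated by atoms.
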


\begin{proof}
Let $\cmp{A} := \cmp{A} (\lat )$ be the matrix representation of
$\lat $, and suppose  $\clrw{\cmp{A}}{U}{W}$, where  $ U:= \{ m_1,
\dots, m_k \}$, is a witness of $W$. Permuting independently rows
and
 columns of
$\cmp{A}$, we may assume that $\clrw{\cmp{A}}{U}{W}$ is of the
triangular form~\eqref{eq:trgform}. Then, the chain
\begin{equation}\label{eq:4.9.a}
 \tlm_1 <  \tlm_2 <  \cdots <  \tlm_k, \qquad \tlm_j = m_j \wedge \cdots \wedge
m_k,
\end{equation}
is a strict chain in $\lat $. Indeed, since $\ell_1, \dots,
\ell_{k-1} \leq m_k $, $\ell_k \not \leq m_k$, in particular  $m
_k = \tlm_k$, and  we
 inductively have:
\begin{equation}\label{eq:4.9.b}
\begin{array}{rclcrcl}
    \ell_1, \ell_2,  \dots, \ell_{k-2}  & \leq &  \tlm_{k-1}, & \quad &
    \ell_{k-1},
    \ell_k  & \not \leq &  \tlm_{k-1}, \\[1mm]
    \ell_1, \dots, \ell_{k-3}  & \leq &  \tlm_{k-2} , & \quad &
    \ell_{k-2}, \ell_{k-1},
    \ell_k  & \not \leq &  \tlm_{k-2}, \\[1mm]
     \vdots \quad & &  \ \vdots & &   \vdots \quad  & &  \ \vdots \\[1mm]
      \ell_1 & \leq &  \tlm_2,
      & \quad &
    \ell_2, \dots,  \ell_{k-1},
    \ell_k  & \not \leq  & \tlm_2 ,
\end{array}
\end{equation}
and $\ell_1 \not \leq \tlm_1 =  m_1 \wedge \cdots \wedge m_{k}$.
\end{proof}

\begin{theorem}\label{thm:htnook} $\nk{\lat } = \hgt{\lat }$ for any finite meet-closed lattice
$\lat := (L,\leq)$.
\end{theorem}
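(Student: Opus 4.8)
The plan is to prove the equality $\nk{\lat} = \hgt{\lat}$ by establishing the two inequalities $\hgt{\lat} \leq \nk{\lat}$ and $\nk{\lat} \leq \hgt{\lat}$ separately, using the two preceding lemmas as the engines that convert chains into \nookind \ subsets and back. Throughout I would use that, by Theorem \ref{thm:rnkSing}, $\nk{\lat} = \rnk{\cmp{A}(\lat)}$ equals the maximal size of a \nookind \ subset of $L$.

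For $\hgt{\lat} \leq \nk{\lat}$ I would start from a maximal chain $B = \ell_0 < \ell_1 < \cdots < \ell_h = T$ realizing the height $h := \hgt{\lat}$. Discarding the bottom $B$, the truncated chain $\ell_1 < \cdots < \ell_h$ satisfies $B < \ell_1$, so Lemma \ref{lem:chain-ind} applies and produces a \nookind \ subset $\{\ell_1, \dots, \ell_h\}$ of size $h$. Since $\nk{\lat}$ is the maximal size of a \nookind \ subset, this gives $\nk{\lat} \geq h = \hgt{\lat}$ at once.

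For the reverse inequality I would take a \nookind \ subset $W = \{\ell_1, \dots, \ell_k\}$ of maximal size $k := \nk{\lat}$, together with a witness $U = \{m_1, \dots, m_k\}$ put into the triangular form \eqref{eq:trgform}. Lemma \ref{lem:inf-chain} then supplies the strict chain $\tlm_1 < \cdots < \tlm_k$ with $\tlm_j = m_j \wedge \cdots \wedge m_k$, a chain of $k$ elements. As it stands this chain only certifies a chain length of $k-1$, so the real work is to upgrade it to $\hgt{\lat} \geq k$. The relation I would invoke is $\ell_k \not\leq \tlm_k = m_k$, read off from the diagonal of the witness: it forces $\tlm_k \neq T$, hence $\tlm_1 < \cdots < \tlm_k < T$ is a strict chain ending at the top. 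Since all its elements lie above $B$, extending it down to $B$ when necessary yields a chain from $B$ to $T$ of length at least $k$, whence $\hgt{\lat} \geq k = \nk{\lat}$. Combining the two inequalities gives the claim.

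The step I expect to be the main obstacle is exactly this off-by-one bookkeeping in the second inequality. The chain extracted in Lemma \ref{lem:inf-chain} need not begin strictly above $B$ (one may well have $\tlm_1 = B$), so one cannot simply append $B$ at the bottom to gain the missing step. The point that rescues the count is that the chain can always be extended \emph{upward} instead, because the witness guarantees $\tlm_k \neq T$; this is also the place where meet-closedness of $\lat$ is used, both to form the meets $\tlm_j$ and to make the comparisons in \eqref{eq:4.9.b} valid. Degenerate cases such as $k = 0$ (equivalently $L = \{B\} = \{T\}$) are trivial and can be dispatched separately.
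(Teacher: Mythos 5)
Your proof is correct and takes essentially the same route as the paper's, whose entire argument is to apply Lemma \ref{lem:chain-ind} to a maximal strict chain (giving $\nk{\lat} \geq \hgt{\lat}$) and Lemma \ref{lem:inf-chain} to a maximal \nookind\ subset (giving the reverse). The off-by-one repair you supply --- noting that the diagonal relation $\ell_k \not\leq m_k$ forces $\tlm_k \neq T$, so the chain extends \emph{upward} to $T$ rather than downward past a possibly-coincident $B$ --- is exactly the bookkeeping the paper's one-line proof leaves implicit, and you handle it correctly.
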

\begin{proof}
Apply Lemmas \ref{lem:chain-ind} and \ref{lem:inf-chain}
respectively to a maximal strict chain and to a  basis of $\lat $.
\end{proof}

\begin{corollary}\label{cor:rankNook} Suppose $\lat := \Lat{\M}$ is the \mfl \ of $\M:=(E, \tH)$,
then  $\rnk{\M} = \nk{\lat} = \hgt{\lat }.$
\end{corollary}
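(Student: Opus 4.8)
The plan is to chain together the three results already established in the excerpt and to supply the one missing link, namely that the rank of the matroid $\M$ equals the height of its lattice of flats $\lat := \Lat{\M}$. By Theorem~\ref{thm:htnook} we immediately have $\nk{\lat} = \hgt{\lat}$, since a \mfl \ is in particular a finite meet-closed lattice (meet-closure is exactly property~(b) of flats, that the intersection of two closed sets is closed). So the entire content of the corollary reduces to proving the equality $\rnk{\M} = \hgt{\lat}$, and the combinatorial identity then follows by transitivity.

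First I would recall that the matroid under consideration should be taken to be simple, so that by Remark~\ref{rmk:Atoms} the atoms of $\lat$ are precisely the singletons $\{x\}$ for $x \in E$, and by the Theorem preceding Corollary~\ref{cor:geomlat} the lattice $\lat$ is geometric. The key geometric fact to invoke is that in a geometric lattice the Jordan--Dedekind chain condition holds (semimodularity, condition~(a) in the definition), so every maximal chain from the bottom $B = \emptyset$ to the top $T = E$ has the same length, and this common length is by definition $\hgt{\lat}$. Then I would exhibit a correspondence between maximal chains of flats and bases of $\M$: given a basis $B = \{x_1, \dots, x_r\}$ with $r = \rnk{\M}$, the ascending sequence of closures
\begin{equation*}
\clos(\emptyset) \subsetneq \clos(\{x_1\}) \subsetneq \clos(\{x_1,x_2\}) \subsetneq \cdots \subsetneq \clos(\{x_1,\dots,x_r\}) = E
\end{equation*}
is a strict chain of flats of length $r$, where strictness of each inclusion and the final equality $\clos(B) = E$ are guaranteed by Proposition~\ref{rmk:closBasis}(i) and~(iii) (independence of $B$ forces $x_{i+1} \notin \clos(\{x_1,\dots,x_i\})$). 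Conversely, a maximal chain of flats yields, by choosing one new element as each flat is entered, an independent set of size equal to the chain length, again using part~(iii) of the proposition. Thus $\rnk{\M}$, the size of a largest basis, matches the length of a maximal flat-chain, which is $\hgt{\lat}$.

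The main obstacle, such as it is, lies in the bookkeeping of the chain--basis correspondence: one must verify that each step of the closure chain strictly increases the flat and that a maximal chain cannot be longer than $\rnk{\M}$ nor shorter, which is where the Jordan--Dedekind condition does the real work of ruling out chains of differing lengths. A subtlety worth flagging is the simplicity hypothesis: if $\M$ has loops or parallel elements the singleton flats and the atom correspondence degrade, so I would either assume $\M$ simple at the outset (the lattice of flats is insensitive to simplification, identifying parallel elements and discarding loops) or remark that $\rnk{\M}$ and $\hgt{\lat}$ are both unchanged under simplification. Given the machinery already in place, I expect this to be essentially a short assembly rather than a substantive new argument, the genuine novelty having been spent in Theorem~\ref{thm:htnook}.
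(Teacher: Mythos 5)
Your proposal is correct and takes essentially the same route as the paper, which states this corollary without a separate proof: implicitly it is Theorem~\ref{thm:htnook} (applicable since $\Lat{\M}$ is finite and meet-closed by Lemma~\ref{lem:oxly}) combined with the classical identification $\rnk{\M} = \hgt{\lat}$ via the chain--basis correspondence, exactly the argument you spell out and which the paper itself rehearses inside the proof of Theorem~\ref{thm:matRep} (closures of initial segments of a basis give a maximal strict chain, using Proposition~\ref{rmk:closBasis}(i) and (iii)). Your fleshing-out of the converse direction (a transversal of a maximal chain is independent) leans on Proposition~\ref{rmk:closBasis}(iii) where strictly the Mac Lane--Steinitz exchange property is also needed, but this is at the same level of informality as the paper and is standard matroid theory.
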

%
%The theorem shows as how to compute the \nook \ of a lattice, but
%we also want a way to compute independent sets. To do so we need
%the following notion:
%\begin{definition}
%Let $\lat := (L, \leq)$ be a finite lattice, and let  $W : = \{
%\ell_1, \dots, \ell_t \} \subseteq L $ be an independent subset
%with witness $U := \{ m_1, \dots, m_t \}$. A subset $W' : = \{
%\ell'_1, \dots, \ell'_t \} \subseteq L$ with $\ell'_i \leq \ell_i$
%and $\ell'_i \not \leq m_i$ for every $i = 1, \dots, t$ is called
%a \textbf{push} of $W$ with respect to $U$.
%\end{definition}
%\begin{proposition}\label{prop:4.10.a} A push of an
%independent set $W$ with witness $U$ is independent  with the same
%witness.
%\end{proposition}
%\begin{proof}
%Clear, since $U$ is a witness of $W'$ as well.
%\end{proof}
%
%
%\begin{proposition}\label{prop:4.10.a} The independent subsets of a lattice $L$  are exactly pushes of
%chains of $L$. That is, if $W := \{ \ell_1, \dots, \ell_k\}$ is
%independent with  witness $U: =\{ m_1, \dots, m_k\}$, then, is in
%the proof Lemma \ref{lem:inf-chain},
%$$ \tlm_1 <  \tlm_2 <  \cdots <  \tlm_k < T, \qquad \tlm_j = m_j \wedge \cdots \wedge
%m_k,
%$$
%is a strick chain. So
%$$ \tlell_ 2 <  \tlell_3 <  \cdots <  \tlell_k <  T,$$ is an independent
%set with witness $\tlm_1, \dots, \tlm_k$, and $\ell_1, \dots,
%\ell_k$ is a push of this chain.
%\end{proposition}
%\begin{proof}
%Clear by construction.
%\end{proof}
%

In the present paper, to simplify the exposition,  we have dealt
mainly with \mfl s, i.e., geometric lattices, which are sufficient
for the purpose of matroid representations. However, a  similar
idea of boolean lattice representations is applicable for much
more general classes of lattice such as sup-generated lattices. In
\cite{IJdim} we develop the theory of lattice representations in
more generality, as well as representations of semilattices and
partial ordered sets.

\subsection{Matroid lattices and partitions}\label{ssec:partition}
Bjorner and Ziegler in \cite{BZ} have earlier results related to
the results of this section which we obtained independently.

The notion of parallel elements of matroid introduces an
equivalence relation on the ground set $E$, and thus on the
matroid $\M := (E, \tH)$. Deleting all the loops of $\M$ and then
considering  the equivalence classes $\tlE := E/ _\|$ under the
reltaion $\prll$, we get a new matroid $\widetilde{\M}$, cf.
\cite[\S1.7]{oxley:matroid}. Thus, by passing to the equivlent
classes $\tlE$, we may assume that $\prll$ is the identity, which
implies that $\widetilde \M$ is simple. Having this perspective,
in the sequel, we always assume that \textbf{all matroids are
simple}.

Given  a \mfl \ $\lat  := \Lat{\M}$,  with $\M := (E, \tH)$ a
simple matroid, then $\lat$ is geometric and meet-closed, cf.
Lemma \ref{lem:oxly}. Recall that the elements of $\lat $ are
flats of $\M$, and thus~$\lat $ has the bottom element $B =
\emptyset$ and the top element $T = E$; for notational
convenience, we denote the these flats of $\lat$ by $F_i$ while
 the atoms of $\lat$ are sometimes denoted also as $\ell_j$. Moreover,
this lattice is join-generated by the set of atoms
$$\htE := \Atom{\M} = \{ \{ x_1 \}, \dots , \{ x_n\} \}, \qquad
x_i \in E.
$$

For  a \mfl \ $\lat := \Lat{\M}$, an edge $(\flt_{i}, \flt_{i-1})$
of $\lat$ corresponds to pair of flats of $\M$, where $\flt_i$
covers the flat $\flt_{i-1}$.
 We assign to each edge $(\flt_{i}, \flt_{i-1})$ of
$\lat $ the set theoretic difference
\begin{equation}\label{eq:Qi} Q_{i} := F_i \ds \sm F_{i-1}.
\end{equation} Then, given a \emph{maximal} (strict) chain
\begin{equation}\label{eq:chainLat} E \ds = F_k \ds >  F_{k-1} \ds
> \cdots \ds >  F_1 \ds > F_0 = \emptyset, \qquad k :=
\hgt{\lat },
\end{equation} of $\lat $, from top to
bottom in $\lat $, it is easy to see that  these subsets $Q_i$ are
disjoint and their union equals $E$.

We call the collections
$$ \tQ := Q_1, \dots, Q_k, \qquad k = \hgt{\lat },$$ the \textbf{partitions} of $E$.
Note that since $\lat  := \Lat{\M}$ is semimodular, all the
partitions of $E$ are of the same size, equals the height of $\lat
$. Abusing notation we also say that $\tQ$ is a partition of the
matroid lattice $\lat  := \Lat{\M}$, with $\M := (E,\tH)$ a simple
matroid.

\begin{definition}\label{def:pcs}
A subset $W = \{ x_1, \dots, x_t \} \subseteq E$ is a
\textbf{partial transversal} of a partition $\tQ$ iff each $ x_j
\in W$ lies in a distinct $Q_i$, i.e., $|W \cap Q_i| \leq 1$ for
each $i = 1, \dots,k.$ (In such a case, we also say that $W$ is an
\textbf{independent} set of the partition $\tQ$.) A \textbf{basis}
of  a partition $\tQ$ is a partial transversal of maximal
cardinality,  equals the height of $\lat $.
\end{definition}

A  partial transversal may have less elements than the size of the
partition. One easily sees that, by the pigeonhole principle, when
a subset has a cardinality greater than the partition size (equals
the number of blocks), then it can not be a partial transversal.

\begin{example} Let $\M := U_{3,4}$ be the uniform matroid over 4
points, then the \mfl \ $\lat  := \Lat{\M}$  of $\M$ is given by
the diagram:
$$ \footnotesize   \xymatrix{
  &  & & \ar@{-}[dlll]_{Q_3 := \{ 3,4 \}} \ar@{-}[dll] \ar@{-}[dl]  \{ 1,2,3,4\}   \ar@{-}[dr]  \ar@{-}[drr] \ar@{-}[drrr]^{Q'_3 := \{ 1,2 \}}&  & \\
 \{ 1,2 \}  & \{ 1,3 \} & \{1,4 \}  & &  \{2,3 \}  & \{2,4 \}  & \{ 3,4
 \}  \\
  & \ar@{-}[ul]^{Q_2 := \{ 2 \}} \{ 1 \}  \ar@{-}[u] \ar@{-}[ur] &
\ar@{-}[ull]   \{ 2\} \ar@{-}[urr] \ar@{-}[urrr] & & \ar@{-}[ulll]
\{3\}  \ar@{-}[u] \ar@{-}[urr] &
\ar@{-}[ulll] \{ 4 \}  \ar@{-}[u] \ar@{-}[ur]_{Q'_2 := \{ 3 \}} &\\
  &  & & \ar@{-}[ull]^{Q_1 := \{ 1 \}} \ar@{-}[ul] \emptyset  \ar@{-}[ur] \ar@{-}[urr]_{Q'_1 := \{ 4 \}} &
}$$ over $12$ vertices, each corresponds to a flat of $\M$, which
has $12$ partitions.

 Two partitions of $E$, $\tQ = \{ 1\}, \{ 2\}, \{ 3,4 \}$ and
$\tQ' = \{ 1,2\}, \{3\}, \{4 \}$, are indicated on the
corresponding edges of the diagram. The maximal partial
transversals of the partition $\tQ$, i.e., the bases,  are
$\{1,2,3\}$ and $\{1,2,4\}$. It easy to see that all the bases of
the partitions are of cardinality $3$.

The representation of this \mfl \ is obtained by a $12 \times 12 $
boolean matrix.
\end{example}

\begin{remark}\label{rmk:pcs} Let  $\tQ$  be a partition of  $\lat := \Lat{\M}$.
\begin{enumerate} \eroman
    \item If $X \subseteq E$ is a partial transversal of $\tQ$,
    any subset $Y \subseteq X$ is also a partial transversal.
    \pSkip

    \item When $X \subseteq E$ is a  not partial transversal of $\tQ$,
    any subset $Z \subseteq E$  containing $X$ is not a partial transversal as well.
\end{enumerate}

\end{remark}

\begin{lemma}\label{lem:cros-ind} If $W = \{ x_1, \dots, x_t \} \subseteq E$ is a partial transversal of a partition $\tQ := Q_1, \dots , Q_k$, with $Q_i := F_i
\sm F_{i-1}$, then the corresponding atom subset $\htW = \{ \{ x_1
\} , \dots, \{ x_t \} \} \subseteq \Atom{\lat }$ is \nookind \ in
$\lat $ with witness $U \subseteq \{F_0, \dots, F_{k-1} \}$.
\end{lemma}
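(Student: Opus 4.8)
The plan is to exhibit an explicit witness for $\htW$. By Definition~\ref{kdef:crank} together with Corollary~\ref{cor:witnes}, proving that $\htW$ is \nookind\ in $\lat$ is the same as producing a set $U$ of $t$ flats for which the $t\times t$ submatrix $\clrw{\cmp{A}}{U}{\htW}$ of the boolean representation $\cmp{A} := \cmp{A}(\lat)$ is nonsingular; by Lemma~\ref{lem:2.1.f} it then suffices to bring this submatrix, after permuting rows and columns, into the triangular Form~\eqref{eq:trgform}.

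First I would translate the entries of $\cmp{A}$ into the geometry of $\lat$. The rows and columns of $\cmp{A}$ are indexed by flats, the order of $\lat$ is inclusion, and by Remark~\ref{rmk:Atoms} the atoms are exactly the singletons $\{x\}$; hence $\{x\}\le F$ precisely when $x\in F$. By Definition~\ref{kdef:latrep} this means that the entry of $\cmp{A}$ in row $\{x\}$ and column $F$ equals $1$ iff $x\notin F$. This dictionary is what couples the combinatorics of the partition to nonsingularity.

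Next I would order the transversal and choose the witness. Since $W$ is a partial transversal, each $x_j$ lies in a distinct block $Q_{i_j}=F_{i_j}\sm F_{i_j-1}$, so after relabelling I may assume $i_1<i_2<\cdots<i_t$. I then set $U := \{F_{i_1-1},\dots,F_{i_t-1}\}$. Because $1\le i_j\le k$, the indices $i_j-1$ lie in $\{0,\dots,k-1\}$, so $U\subseteq\{F_0,\dots,F_{k-1}\}$, as claimed. Finally I would check the triangular shape of $\clrw{\cmp{A}}{U}{\htW}$ with rows ordered $\{x_1\},\dots,\{x_t\}$ and columns ordered $F_{i_1-1},\dots,F_{i_t-1}$. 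The diagonal entry in row $\{x_a\}$, column $F_{i_a-1}$ is $1$, because $x_a\in Q_{i_a}$ forces $x_a\notin F_{i_a-1}$. For $b>a$ we have $i_a\le i_b-1$, so $x_a\in F_{i_a}\subseteq F_{i_b-1}$ and the corresponding entry is $0$; thus every entry strictly above the diagonal vanishes and the submatrix is lower triangular with $1$'s on the diagonal. By Lemma~\ref{lem:2.1.f} its permanent is $\one$, so $U$ is a witness and $\htW$ is \nookind.

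The one point that needs care is the direction of the triangular inequalities, namely that the choice $F_{i_j-1}$ (rather than $F_{i_j}$) places the zeros above the diagonal; this is exactly what the increasing ordering $i_1<\cdots<i_t$ secures, through the monotonicity $x_j\in F_m \iff m\ge i_j$ of membership along the chain. Beyond this bookkeeping I expect no genuine obstacle, the essential work being done by the fact that each transversal element enters the chain at a distinct level.
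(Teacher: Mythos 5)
Your proof is correct and takes essentially the same approach as the paper: both arguments exhibit the chain flats lying just below each transversal element's block as an explicit witness, and verify nonsingularity via the triangular form of Lemma~\ref{lem:2.1.f}. The only difference is cosmetic — the paper first reduces to the case where $W$ is a basis of the partition (with witness exactly $\{F_0,\dots,F_{k-1}\}$), whereas you handle a general partial transversal directly by choosing the columns $F_{i_1-1},\dots,F_{i_t-1}$.
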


\begin{proof}
It is enough to prove the lemma for  $W$ a basis of the partition
$\tQ$, i.e., $t = k$ having a witness  $U =  \{F_0, \dots, F_{k-1}
\}$. Relabeling the elements of $W$, we may assume that $x_i \in
Q_i$, $i =1,\dots,k$. Let $\ell_i := \{ x_i \}$ -- the atoms of
$\lat$. Then, by construction, we have
$$
\begin{array}{rclcrcl}
    \ell_1  & \leq & F_1, F_2, \dots,  F_{k-1}, & \quad &
    \ell_1  & \not \leq &  F_0, \\[1mm]
    \ell_2  & \leq &  F_2, \dots, F_{k-1} , & \quad &
    \ell_2  & \not \leq &  F_0, F_{1}, \\[1mm]
     \vdots \  & &  \ \vdots & &   \vdots \  & &  \ \vdots \\[1mm]
      \ell_{k-1} & \leq &  F_{k-1},
      & \quad &
    \ell_{k-1}  & \not \leq  &  F_0, F_{1}, \dots, F_{k-2} ,
\end{array}
$$
and $\ell_k \not \leq F_0, F_{1}, \dots, F_{k-1}$. Writing the
matrix of these relations shows that $U$ is a witness of $W$.
\end{proof}

\begin{theorem}\label{thm:crossInd} A subset $W = \{ x_1, \dots, x_t \} \subseteq E$ is a partial transversal
 of some  partition $\tQ := Q_1, \dots , Q_k$ iff  $\htW = \{
\{ x_1 \} , \dots, \{  x_t \} \} \subseteq \Atom{\lat }$ is
\nookind \ in the \mfl \ $\lat  := \Lat{\M}$, $\M := (E , \tH)$.
\end{theorem}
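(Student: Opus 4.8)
The plan is to establish Theorem~\ref{thm:crossInd} as the biconditional combining Lemma~\ref{lem:cros-ind} (already proved) with its converse. The forward direction is immediate: if $W = \{x_1, \dots, x_t\}$ is a partial transversal of some partition $\tQ := Q_1, \dots, Q_k$, then Lemma~\ref{lem:cros-ind} directly yields that the atom subset $\htW = \{\{x_1\}, \dots, \{x_t\}\}$ is \nookind{} in $\lat := \Lat{\M}$ with witness $U \subseteq \{F_0, \dots, F_{k-1}\}$. So the entire content of the theorem lies in the converse, and that is where I would concentrate the work.

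\noindent\textbf{The converse direction.}
Suppose $\htW = \{\{x_1\}, \dots, \{x_t\}\}$ is \nookind{} in $\lat$. By Definition~\ref{kdef:crank} and Theorem~\ref{thm:rnkSing}, there is a witness $U = \{m_1, \dots, m_t\} \subseteq L$ of $\htW$, i.e. a $t \times t$ nonsingular submatrix $\clrw{\cmp{A}}{U}{\htW}$. Permuting rows and columns independently, by Lemma~\ref{lem:2.1.f} I may assume this witness has the triangular Form~\eqref{eq:trgform}. The key idea is then to run the construction of Lemma~\ref{lem:inf-chain} to convert this witness into a strict chain of flats, and then to extend that chain to a \emph{maximal} one, whose associated blocks $Q_i$ will define the partition $\tQ$ witnessing that $W$ is a partial transversal. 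Concretely, I would set $\tlm_j := m_j \wedge \cdots \wedge m_t$ (using that $\lat$ is meet-closed, Lemma~\ref{lem:oxly}); by the argument of Lemma~\ref{lem:inf-chain} this produces a strict chain $\tlm_1 < \tlm_2 < \cdots < \tlm_t$ of flats together with the incidence relations $\ell_i := \{x_i\} \leq \tlm_{i+1}$ but $\ell_i \not\leq \tlm_i$. The triangular form of the witness is exactly what guarantees these relations.

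\noindent\textbf{From the chain to the partition.}
Now I would refine this strict chain of flats into a maximal chain $\emptyset = F_0 < F_1 < \cdots < F_k = E$ of $\lat$, which is possible because $\lat$ is geometric hence semimodular and satisfies the Jordan--Dedekind chain condition, so every chain extends to a maximal one of length $k = \hgt{\lat}$. Let $\tQ := Q_1, \dots, Q_k$ be the induced partition, $Q_i := F_i \sm F_{i-1}$, as in~\eqref{eq:Qi}. The remaining task is to verify that each $x_i$ lies in a \emph{distinct} block $Q_{j(i)}$, so that $W$ is a partial transversal. The point is that, because each $\ell_i = \{x_i\}$ is an atom and the flats $\tlm_i$ appear among the $F_j$, the relations $\ell_i \leq \tlm_{i+1}$ and $\ell_i \not\leq \tlm_i$ pin down the unique block of the refined chain into which $x_i$ falls, and the strictness of the chain $\tlm_1 < \cdots < \tlm_t$ forces these blocks to be pairwise distinct; here property~(c) of flats (covering flats partition $E \sm X$) is what ensures each $x_i$ lands in exactly one block.

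\noindent\textbf{Main obstacle.}
I expect the technical heart of the proof to be the bookkeeping in the last step: showing that when the strict chain $\{\tlm_j\}$ coming from the witness is extended to a maximal chain, the elements $x_1, \dots, x_t$ still occupy distinct blocks of the finer partition. A witness chain of length $t$ may be refined by inserting intermediate flats, and one must check that no two of the $x_i$ get absorbed into the same newly created block $Q_{j}$. The cleanest route is probably to observe that $x_i \in \tlm_{i+1} \sm \tlm_i$ in the original coarse chain, so $x_i$ lies in the half-open segment of flats strictly above $\tlm_i$ and at or below $\tlm_{i+1}$; since these segments are disjoint for distinct $i$ (by strictness $\tlm_1 < \cdots < \tlm_t$), any refinement subdivides each segment independently, and the $x_i$ remain separated. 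Making this precise, with careful attention to the atom $\ell_i \not\leq \tlm_i$ condition ensuring $x_i \notin \tlm_i$, is the crux of the argument.
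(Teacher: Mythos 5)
Your proof is correct, but it follows a genuinely different route from the paper's. The forward direction is identical (both are immediate from Lemma~\ref{lem:cros-ind}). For the converse, however, the paper argues by contraposition: if $W$ is a partial transversal of no partition, it forms the flat $\brW := \clos(W)$, passes to the sublattice of elements of $\lat$ lying below $\brW$, restricted to the join-generating atoms $\htW$, asserts that the height of this restricted lattice is strictly less than $|W|$, and concludes from Theorem~\ref{thm:htnook} that $\nk{\htW} < |\htW|$, i.e.\ that $\htW$ is \nookdep. You instead prove the implication directly and constructively: from a witness of the \nookindc\ of $\htW$ you extract, via Lemma~\ref{lem:inf-chain}, the strict chain $\tlm_j := m_j \wedge \cdots \wedge m_t$ together with the relations $\{x_i\} \leq \tlm_{i+1}$ and $\{x_i\} \not\leq \tlm_i$, extend it to a maximal chain of $\lat$ (any chain in a finite lattice extends to a maximal one, and Jordan--Dedekind gives it length $\hgt{\lat}$), and observe that the $x_i$ land in pairwise distinct blocks of the induced partition because the index intervals they can occupy are pairwise disjoint. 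Both arguments ultimately rest on the same witness-to-chain mechanism (Lemma~\ref{lem:inf-chain} is also what drives Theorem~\ref{thm:htnook}), but yours buys constructivity --- it exhibits an explicit partition of which $W$ is a partial transversal --- and it makes rigorous the bookkeeping that the paper compresses into an asserted height inequality; the paper's version buys brevity by quoting Theorem~\ref{thm:htnook} wholesale and never extending chains. Two small repairs to your write-up: the fact needed at the end is not property (c) of flats (covers of a flat partition its complement) but simply that the blocks $Q_j := F_j \sm F_{j-1}$ of a maximal chain are pairwise disjoint with union $E$, as noted after \eqref{eq:chainLat}; and you should treat $x_t$ separately, since it has no $\tlm_{t+1}$ above it --- its interval runs from $\tlm_t$ up to the top element $E$, which your maximal-chain extension supplies automatically.
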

\begin{proof} $(\Rightarrow):$ Immediate By Lemma \ref{lem:cros-ind}.

\pSkip  $(\Leftarrow):$ Suppose $W$ is not a partial transversal,
and let $\brW := \clos(W)$ be the closure of $W$ -- a flat
of~$\M$. Thus, $\brW$ is a proper element of the flat-lattice
$\lat$, join-generated by a subset $\htV \subseteq \Atom{\lat}$ of
atoms of $\lat$. Let $\lat'$ be the sublattice of $\lat$
consisting of all elements of $\lat$ below $\brW$, and let
$\lat'|_\htW$ be the restriction of $\lat'$ to the join-generating
subset $\htW \subseteq \htV$. Then, $\hgt{\lat'|_\htW} <  |\htW| =
|W|$, since $W$ is not a partial transversal. Thus, by Theorem
\ref{thm:htnook}, $\nk{\htW} < |\htW|$, which means that $\htW$ is
dependent in $\lat$.
\end{proof}

%\begin{theorem} Given a partition $\tQ := Q_1, \dots , Q_k$ of  a lattice
%$L := \Lat{\M}$, $\M := (E,\tH)$,  and a subset $X \subseteq L$,
%the partial transversal $M \cap Q_1, \dots ,M \cap Q_k$ is an
%\nookind \ subset of $L$ restricted to~$M$.
%\end{theorem}
%\begin{proof}
%
%\end{proof}

\begin{corollary} Independence of partial transversals and
the  \nookindc \ of lattice coincide.

\end{corollary}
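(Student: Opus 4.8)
The plan is to derive this as an immediate consequence of Theorem \ref{thm:crossInd}, which already carries all the substantive content. First I would invoke the one-to-one correspondence between the ground set $E$ and the atoms $\Atom{\lat}$ of the \mfl \ $\lat := \Lat{\M}$ recorded in Remark \ref{rmk:Atoms}: under this bijection a subset $W = \{ x_1, \dots, x_t\} \subseteq E$ is identified with the atom subset $\htW = \{ \{ x_1 \}, \dots, \{ x_t \} \} \subseteq \Atom{\lat}$. This identification is what lets one compare the two notions of independence on a single underlying collection of subsets, rather than on two different ambient objects (subsets of $E$ versus subsets of $\Atom{\lat}$).

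Next I would observe that, under this identification, the statement that $W$ is a partial transversal of some partition $\tQ$ of $\lat$ is precisely the left-hand side of the biconditional in Theorem \ref{thm:crossInd}, while the statement that $\htW$ is \nookind \ in $\lat$ is precisely its right-hand side. Since Theorem \ref{thm:crossInd} asserts the equivalence of these two conditions for every subset $W \subseteq E$, the family of subsets of $E$ that are independent as partial transversals coincides, set by set, with the family whose associated atom subsets are \nookind. Hence the two notions of independence determine one and the same hereditary collection on $E$, which is exactly the assertion of the corollary.

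The point to keep in mind is that there is no residual obstacle here: the real work has already been done in Theorem \ref{thm:crossInd}, whose harder implication $(\Leftarrow)$ passes to the sublattice of elements below $\clos(W)$ and applies the height-equals-\nook \ identity of Theorem \ref{thm:htnook}. The corollary is therefore purely a matter of repackaging that established equivalence as an identification of two independence structures, so the proof I would give is essentially one line invoking Theorem \ref{thm:crossInd} together with the atom correspondence of Remark \ref{rmk:Atoms}.
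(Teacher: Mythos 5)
Your proposal is correct and matches the paper's (implicit) argument exactly: the corollary is stated without proof precisely because it is the content of Theorem \ref{thm:crossInd} restated, via the identification of elements of $E$ with atoms of $\Lat{\M}$ from Remark \ref{rmk:Atoms}. Nothing further is needed.
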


\begin{corollary}
Maximal \nookind \ subsets of a lattice $\lat $ correspond to the
bases of its partitions.
\end{corollary}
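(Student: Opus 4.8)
The plan is to obtain this correspondence directly from Theorem \ref{thm:crossInd}, using the rank equality of Theorem \ref{thm:htnook} to control cardinalities. Throughout I identify a subset $W = \{x_1,\dots,x_t\} \subseteq E$ with the atom subset $\htW = \{\{x_1\},\dots,\{x_t\}\} \subseteq \Atom{\lat}$ via the bijection of Remark \ref{rmk:Atoms}; then, by Theorem \ref{thm:crossInd}, the assertions ``$\htW$ is \nookind'' and ``$W$ is a partial transversal of some partition'' are interchangeable, and it is these atom subsets that the statement concerns, in line with the preceding corollary.

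First I would show that every inclusion-maximal \nookind \ atom subset has cardinality exactly $\hgt{\lat}$. Let $\htW$ be \nookind \ with $|W| < \hgt{\lat}$. By Theorem \ref{thm:crossInd}, $W$ is a partial transversal of some partition $\tQ := Q_1,\dots,Q_k$ with $k = \hgt{\lat}$ blocks, each block $Q_i = F_i \sm F_{i-1}$ being nonempty since $F_i > F_{i-1}$. As $|W| < k$ while $|W \cap Q_i| \leq 1$ for every $i$, some block $Q_i$ has $W \cap Q_i = \emptyset$; choosing any $x \in Q_i$, the set $W + x$ is again a partial transversal of $\tQ$, hence \nookind \ by Theorem \ref{thm:crossInd}. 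Thus $\htW$ can be enlarged, contradicting maximality, so inclusion-maximality forces $|W| = \nk{\lat} = \hgt{\lat}$.

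With this in hand, the two inclusions are immediate. If $\htW$ is a maximal \nookind \ atom subset, then $|W| = \hgt{\lat}$, and by Theorem \ref{thm:crossInd} $W$ is a partial transversal of a partition $\tQ$ having exactly $\hgt{\lat}$ blocks; since $W$ meets each block at most once and $|W|$ equals the number of blocks, the pigeonhole principle forces $W$ to meet every block exactly once, so $W$ is a basis of $\tQ$ in the sense of Definition \ref{def:pcs}. Conversely, if $W$ is a basis of a partition $\tQ$, then $W$ is a partial transversal with $|W| = \hgt{\lat}$, so $\htW$ is \nookind \ by Theorem \ref{thm:crossInd}; and since $\nk{\htW} = |W| = \hgt{\lat} = \nk{\lat}$ is the maximal value permitted by the bound $\nk{W'} \leq \nk{\lat}$, no \nookind \ subset $W'$ can properly contain $\htW$, that is, $\htW$ is maximal.

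I expect the only genuine subtlety to be the first step, namely that inclusion-maximality coincides with maximum cardinality; this is exactly where the extension-by-an-unused-block argument is required. Once that is settled, both directions reduce to Theorem \ref{thm:crossInd} together with the elementary pigeonhole count on the blocks of the partition.
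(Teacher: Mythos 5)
Your proof is correct and follows the route the paper intends: the corollary is stated there without proof as an immediate consequence of Theorem \ref{thm:crossInd}, and your argument derives it from exactly that theorem together with Theorem \ref{thm:htnook}. The one detail the paper leaves implicit --- that inclusion-maximality forces cardinality $\hgt{\lat}$, via extending a short partial transversal by an element of an unused (nonempty) block --- is precisely the step you supply, and you handle it correctly.
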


%\section{Boolean representations of matroids via their lattices}

\section{Representations of matroids} Having the method for
boolean representation of lattices at hand, together with their
connection to matroids, we can state our main result:

\begin{theorem}\label{thm:matRep}
Given a simple matroid $\M := (E, \tH)$, let $\cmp{A} :=
\cmp{A}(\lat )$ be the boolean representation of the \mfl \ $\lat
:= \Lat{\M}$, and let $\cmp{A}|_{\Atom{\lat }}$ be the restriction
of $\cmp{A}$ to the rows corresponding to the atoms of $\lat $.
Then, $\trn{(\cmp{A}|_{\Atom{\lat }})}$ is a boolean
representation of $\M$.
\end{theorem}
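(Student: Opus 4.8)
The plan is to unwind the definition of a boolean representation and reduce the claim, via the results of \S\ref{ssec:partition}, to a single matroid-theoretic equivalence. First I would fix the labelling of the columns of $\trn{(\cmp{A}|_{\Atom{\lat }})}$: by Remark \ref{rmk:Atoms} each $x \in E$ corresponds bijectively to the atom $\{x\}$ of $\lat $, and under the transpose this atom indexes one column. With this identification, showing that $\trn{(\cmp{A}|_{\Atom{\lat }})}$ is a $\bool$-representation of $\M$ amounts to verifying, for every $Y \subseteq E$, that the columns indexed by $Y$ are independent (Definition \ref{def:tropicDep}) if and only if $Y \in \tH$. Now independence of a set of columns of the transpose is, by construction, the same as independence of the corresponding rows $\rw{\cmp{A}}{\htW}$ of $\cmp{A}|_{\Atom{\lat }}$, where $\htW := \{ \{x\} : x \in Y \}$; by Definition \ref{kdef:crank} this says exactly that $\htW$ is \nookind \ in $\lat $. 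Theorem \ref{thm:crossInd} then translates this into a combinatorial condition, so the whole statement collapses to the equivalence
$$ Y \in \tH \quad \Longleftrightarrow \quad Y \text{ is a partial transversal of some partition of } \lat . $$

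For the forward direction I would use the closure flag of an independent set. Given independent $Y = \{ y_1, \dots, y_t \}$, the flats $\clos(\{ y_1, \dots, y_i \})$ form a strictly increasing chain of heights $0, 1, \dots, t$: strictness holds because $y_i \notin \clos(\{ y_1, \dots, y_{i-1} \})$ by Proposition \ref{rmk:closBasis}(iii), and consecutive members, differing in height by exactly one, cover each other by the Jordan--Dedekind condition of the geometric lattice. Extending $\clos(Y)$ upward to $E$ by any maximal chain produces a maximal chain of $\lat $ whose partition $\tQ := Q_1, \dots, Q_k$ places each $y_i$ in the distinct block $Q_i = \clos(\{y_1,\dots,y_i\}) \sm \clos(\{y_1,\dots,y_{i-1}\})$, so $Y$ is a partial transversal.

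For the converse, suppose $Y$ is a partial transversal of a partition coming from a maximal chain $\emptyset = F_0 < F_1 < \cdots < F_k = E$, with $y_i \in Q_{j_i} = F_{j_i} \sm F_{j_i - 1}$ and $j_1 < \cdots < j_t$. Since $y_1, \dots, y_{i-1}$ all lie in the flat $F_{j_i - 1}$ (because $j_l \le j_i - 1$ for $l < i$) while $y_i \notin F_{j_i - 1}$, closedness of $F_{j_i - 1}$ gives $y_i \notin \clos(\{ y_1, \dots, y_{i-1} \})$. Hence the closures $\clos(\{ y_1, \dots, y_i \})$ strictly increase, forcing $\rnk{\{y_1, \dots, y_i\}} = i$ for each $i$ and therefore $Y \in \tH$. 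Together with Theorem \ref{thm:crossInd} this establishes that $Y$ is matroid-independent iff its atom set is \nookind, i.e. iff the columns of $\trn{(\cmp{A}|_{\Atom{\lat }})}$ indexed by $Y$ are independent, which is precisely what a $\bool$-representation requires.

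The orientation and sizes of the transpose, and the passage between row- and column-independence, are routine bookkeeping. I expect the genuine obstacle to lie in the forward direction of the final equivalence: verifying that the closure flag of an independent set really consists of consecutive covers and can be completed to a \emph{maximal} chain of $\lat $ without disturbing the placement of the $y_i$ in distinct blocks. This is exactly where semimodularity, the Jordan--Dedekind chain condition, and Lemma \ref{lem:oxly} (identifying lattice meets and joins with matroid intersection and closure) are doing the real work.
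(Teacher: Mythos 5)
Your proposal is correct, and its skeleton matches the paper's: identify the columns of $\trn{(\cmp{A}|_{\Atom{\lat }})}$ with the atoms of $\lat$, and use Theorem \ref{thm:crossInd} to reduce the whole claim to the equivalence ``$Y \in \tH$ iff $Y$ is a partial transversal of some partition of $\lat$''. The two implications, though, are executed differently. For the forward direction the paper first extends an independent set to a basis $B$, forms the chain of flats $F_i = \clos(\{b_1,\dots,b_i\})$ with $b_i \in Q_i := F_i \sm F_{i-1}$, and then passes back down to the given independent set via Remark \ref{rmk:pcs}(i); you instead build the closure flag of $Y$ itself and complete it to a maximal chain, which is the same mechanism minus the detour through a basis (both versions assert, with comparable brevity, the fact that $\hgt{\clos(\{y_1,\dots,y_i\})} = i$, i.e.\ that height equals rank in the geometric lattice). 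The genuine divergence is the converse: the paper never proves ``partial transversal implies independent'' directly; it shows that a \emph{circuit} cannot be a partial transversal (its top-most element $x_t$ would lie in $\clos(C - x_t)$, which is contained in the flat directly below it in the chain) and then closes the argument through Proposition \ref{rmk:closBasis}(v) together with hereditarity. Your prefix-closure argument --- $y_i \notin F_{j_i - 1} \supseteq \clos(\{y_1,\dots,y_{i-1}\})$, so the rank grows by one at each step --- proves the converse outright. What the paper's route buys is alignment with its general circuit-based criterion for comparing hereditary collections; what yours buys is a cleaner, self-contained equivalence that avoids the paper's somewhat tangled contradiction bookkeeping in the circuit step, at the cost of invoking the standard matroid fact that adjoining an element outside the closure of an independent set preserves independence (which itself follows from Proposition \ref{rmk:closBasis}(ii)).
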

\begin{proof} Let $B := \{b_1, \dots, b_k \} $ be a basis of $\M$ and consider the nested  sequence of flats
$$
\begin{array}{lll}
 F_k   & := &  \clos(B) , \\[1mm]
  F_{k-1} & := &   \clos(B \sm \{ b_k \} ), \\ \ \vdots & & \quad \vdots \\[1mm]
F_{k-j} & := &  \clos(B \sm \{ b_{k-j+1}, \dots, b_k \}),
\\ \  \vdots & & \quad \vdots \\[1mm]
F_{1} & := &  \clos(B \sm \{ b_2, \dots, b_{k-1}\}), \\[1mm]
F_{0}  & :=  & \emptyset.  \\[1mm]
\end{array}
 $$
 This flat sequence introduces a  chain \begin{equation}\label{eq:chain}
\emptyset =  F_0 \ds <  F_1   \ds < \cdots \ds < F_{k-1} \ds < F_k
= E
\end{equation}  in $\lat $, where  by Proposition
\ref{rmk:closBasis}.(iii),  $\hgt{F_i} = i$ for each $i=0, \dots,
k$. (Note that we also have $k = \hgt{\lat }$.) Then, by
construction, for each $i = 1, \dots, k$
$$ b_i \in Q_i := F_{i} \sm F_{i-1},
$$
since adding $b_i$ to $F_{i-1}$ would increase the rank of
$F_{i-1}$ -- contradicting the height of $\lat $, cf. Theorem~
\ref{thm:htnook}. Thus, the chain \eqref{eq:chain} is a maximal
strict chain of $\lat $. %, and hence is \nookind \ by Lemma
%\ref{lem:chain-ind}.
Moreover,  it is also a witness for the
independence of $\htB := \{ \{ b_1\}, \dots, \{b_k\} \}$, cf.
Lemma \ref{lem:inf-chain}. By the partition method (cf.
\S\ref{ssec:partition}), $B$ is a partial transversal  of the
partition $\tQ := Q_1, \dots, Q_k$ (cf. Definition \ref{def:pcs})
and is also a basis of $\tQ.$

Since each independent subset $X \subset E$ of $\M$  is contained
in some  basis $B$, the above argument shows that $X$ is also
independent in some partition of $\lat $, and thus $\htX$ is
\nookind \ in the lattice representation $\cmp{A}(\lat )$ and in
particular in the restriction $\cmp{A}|_{\Atom{\lat}}$ to the rows
corresponding to atoms of $\lat$. \pSkip

%Suppose $X = \{x_1, \dots, x_t\} \subset E$ is dependent, in
%particular, by Proposition \ref{rmk:closBasis}.(ii), $X =
% \clos(X-x_i)$ for every $x_i \in X$.

Let  $C = \{x_1, \dots, x_t\} \subset E$ be a circuit, then  by
Proposition \ref{rmk:closBasis}.(ii), $x$ is in $\clos(C-x_i)$ for
every $x_i \in X$. We also know that $X \subseteq Y$ implies
$\clos(X) \subseteq \clos(Y)$, since $\clos$ is a closure
operator.
%By Remark \ref{rmk:Atoms}, we may identify each $x_i$
%with the atom $\ell_i := \{x_i \}$ of $\lat $ and $X = \{x_1,
%\dots, x_k\}$ with the subset of atoms $ \htX:= \{ \ell_1, \dots,
%\ell_k \}$ of $\lat $.
Assume that $X$ is a partial transversal of a partition $\tQ :=
Q_1, \dots, Q_k$, where $k \geq t$, and suppose that $x_t \in Q_k
= E \sm F_{k-1}$, i.e., $x_t$ is the closest element to the top in
$\tQ$ up to reordering. Then, there is a flat $Y$ containing $X -
x_k$ but not $x_k$, and thus $\clos(X- x_t) \subseteq \clos(Y)$.
But this is  a contradiction since $x_t \in \clos(X - x_t)$, which
is a subset of $Y$. Then,  $\htX: = \{ \{x_1\}, \dots, \{ x_t\}
\}$ is \nookdep \ in $\lat $, by Theorem \ref{thm:crossInd}. Thus,
we are done by Proposition \ref{rmk:closBasis}.(v).
\end{proof}

Composing the lattice representations of \S\ref{ssec:LatRep} with
the extraction of matroid representations as in Theorem
\ref{thm:matRep} we get the following:

\begin{theorem}\label{cor:boolRep}
Any matroid has a boolean representation.
\end{theorem}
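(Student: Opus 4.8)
The plan is to reduce Theorem~\ref{cor:boolRep} to the simple case and then invoke the machinery already developed for the flat-lattice. First I would handle the reduction to simple matroids: given an arbitrary matroid $\M := (E, \tH)$, I delete all loops and pass to the equivalence classes $\tlE := E/_{\prll}$ under the parallelism relation, producing the simple matroid $\widetilde{\M}$, exactly as in \S\ref{ssec:partition}. Since loops and parallel elements are handled trivially in a boolean representation (a loop corresponds to a zero column, and parallel elements correspond to equal columns), a boolean representation of $\widetilde{\M}$ extends in the obvious way to one of $\M$; this reduction is routine and lets me assume $\M$ is simple.

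With $\M$ simple, the bulk of the work is already done. I would form the \mfl\ $\lat := \Lat{\M}$, which is geometric and meet-closed by Lemma~\ref{lem:oxly} and the theorem of Oxley characterizing geometric lattices as flat-lattices. Then I take $\cmp{A} := \cmp{A}(\lat)$, the boolean representation of the lattice from Definition~\ref{kdef:latrep}, and restrict to the rows indexed by atoms, forming $\cmp{A}|_{\Atom{\lat}}$. By Remark~\ref{rmk:Atoms} the atoms of $\lat$ are in bijection with the elements of $E$, so transposing gives a boolean matrix whose columns are indexed by $E$. The whole point of Theorem~\ref{thm:matRep} is precisely that $\trn{(\cmp{A}|_{\Atom{\lat}})}$ is a boolean representation of $\M$, so Theorem~\ref{cor:boolRep} follows immediately by composing the lattice-representation construction of \S\ref{ssec:LatRep} with the extraction in Theorem~\ref{thm:matRep}.

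The genuinely substantive content therefore lives entirely in Theorem~\ref{thm:matRep}, which I am allowed to assume, so for the corollary itself there is essentially no remaining obstacle; the proof is a one-line composition. The part that \emph{would} be hard, were it not already established, is the heart of Theorem~\ref{thm:matRep}: showing that column-independence in the transposed atom-restricted matrix coincides exactly with independence in $\M$. That equivalence rests on the chain of correspondences built up earlier---namely that independent sets of $\M$ sit inside bases, bases yield maximal strict chains and hence witnesses (Lemma~\ref{lem:inf-chain}), and that \nookindc\ of atom subsets matches the partial-transversal/partition picture (Theorem~\ref{thm:crossInd}), together with the dependence side coming from circuits via Proposition~\ref{rmk:closBasis}. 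For the corollary, the only care needed is to make the loop/parallel reduction clean so that the final representation is genuinely a representation of the original matroid rather than merely of its simplification.
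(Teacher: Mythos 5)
Your proposal is correct and follows essentially the same route as the paper: the paper's proof of Theorem~\ref{cor:boolRep} is exactly the one-line composition of the lattice representation of \S\ref{ssec:LatRep} with the extraction in Theorem~\ref{thm:matRep}. Your explicit treatment of loops and parallel elements (zero columns and duplicate columns) is a sound addition that the paper instead absorbs into its standing assumption, made in \S\ref{ssec:partition}, that all matroids are simple.
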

\begin{proof} Any lattice $\lat  := (L, \leq)$, and in particular every  \mfl \ $\lat
:= \Lat{\M}$, has a boolean representation. The proof is then
completed by Theorem \ref{thm:matRep}.
\end{proof}

\subsection{Examples} We give some simple demonstrating examples
of matroid representations, extracted from their lattice
representations.

\begin{example} Let $\M$ be the simple matroid over the 5 point set $E:= \{1,2,3,4,5 \}$  whose bases are all the
3-subset except $\{1,2,3 \}$ and $\{3,4,5 \}$:
$$\footnotesize \xy %(17,0)*+{2},
 (18,10)*+{3},(57,2)*+{5},(57,18)*+{1}, (40,3)*+{4},
(40,17)*+{2},
(20,10)*+{\bullet},(55,4)*+{\bullet},(55,16)*+{\bullet},
(40,6)*+{\bullet}, (40,14)*+{\bullet},
(19,10)*+{}; (55,16)*+{}; **\crv{}, (19,10)*+{}; (55,4)*+{};
**\crv{},(39,15)*+{}; (61,-1)*+{};
\endxy$$

The \mfl    \ of $\lat  : = \Lat{\M}$ associated to $\M$  is then
given by the following diagram
$$ \footnotesize   \xymatrix{
  &  & & \ar@{-}[dlll] \ar@{-}[dll] \ar@{-}[dl]  \{ 1,2,3,4,5\}  \ar@{-}[d] \ar@{-}[dr] \ar@{-}[drrr]&  & \\
 \{ 1,2,3 \}  &   \{1,4 \}  & \{1,5 \}  & \{2,4 \}  & \{ 2,5 \}
 &  & \{ 3,4,5 \} \\
  & \ar@{-}[ul] \{ 1 \}  \ar@{-}[u] \ar@{-}[ur] &
\ar@{-}[ull]   \{ 2\} \ar@{-}[ur] \ar@{-}[urr] & \ar@{-}[ulll] \{3\} \ar@{-}[urrr] &
\ar@{-}[ulll] \ar@{-}[ul] \{ 4 \} \ar@{-}[urr] & \ar@{-}[ulll] \ar@{-}[ul] \{ 5\} \ar@{-}[ur] \\
  &  & & \ar@{-}[ull] \ar@{-}[ul] \emptyset\ar@{-}[u]  \ar@{-}[ur] \ar@{-}[urr] &  &
}$$ whose 13 flats are as listed above and the atoms are the
singeltons subsets.
% $F_{123} := \{ 1,2,3\} $, $F_{345} := \{3,4,5
%\}$, $F_{ij} := \{i,j \}$ for $i = 1,2$, $j = 4,5$, $F_t := \{
%t\}$ for $t = 1,2,3,4,5$ (i.e. the atoms of $\lat$), and
% $\emptyset$.

 The representation $\cmp{A} := \cmp{A}(\lat )$ of the \mfl \ $\lat  := \Lat{\M}$ is given by
 the following $13 \times 13 $ boolean
matrix:
$$ \cmp{A}(\lat ) \ds = \begin{array}{l||cccc cc|cc cccc cc}
        \not\leq & \emptyset  & F_1 & F_2 & F_3 & F_4  & F_5 & F_{14} & F_{15} & F_{24} & F_{25} & F_{123} &
        F_{345} & E
        \\\hline \hline
       \emptyset   & 0 & 0 & 0 & 0 & 0 & 0 & 0 & 0 & 0 & 0 & 0 & 0 & 0
       \\ \hline
        F_1 &  1 & 0 & 1 & 1 & 1 & 1 & 0 & 0 & 1 & 1 & 0 & 1 & 0 \\
        F_2 &  1 & 1 & 0 & 1 & 1 & 1 & 1 & 1 & 0 & 0 & 0 & 1 & 0 \\
        F_3 &  1 & 1 & 1 & 0 & 1 & 1 & 1 & 1 & 1 & 1 & 0 & 0 & 0 \\
        F_4 &  1 & 1 & 1 & 1 & 0 & 1 & 0 & 1 & 0 & 1 & 1 & 0 & 0 \\
     F_{5} &  1 &  1 & 1 & 1 & 1 & 0 & 1 & 0 & 1 & 0 & 1 & 0 & 0
     \\ \hline
     F_{14} &   1 & 1 & 1 & 1 & 1 & 1 & 0  & 1 & 1 & 1 & 1 & 1 & 0  \\
     F_{15} &  1 & 1 & 1 & 1 & 1 & 1 & 1  & 0 & 1 & 1 & 1 & 1 & 0\\
     F_{24} &  1 & 1 & 1 & 1 & 1 & 1 & 1  & 1 & 0 & 1 & 1 & 1 & 0\\
     F_{25} &  1 & 1 & 1 & 1 & 1 & 1 & 1  & 1 & 1 & 0 & 1 & 1 & 0\\
     F_{123} & 1 & 1 & 1 & 1 & 1 & 1 & 1  & 1 & 1 & 1 & 0 & 1 & 0 \\
     F_{345} &  1 & 1 & 1 & 1 & 1 & 1 & 1  & 1 & 1 & 1 & 1 & 0 & 0\\
     E &  1 & 1 & 1 & 1 & 1 & 1 & 1 & 1 & 1 & 1 & 1 & 1 & 0  \\
     \end{array} $$

Taking the transpose of the restriction $B := \cmp{A}|_{\Atom{\lat
}}$ of $\cmp{A}$ to the rows corresponding to the  atoms of $\lat
$, and then omitting the rows of $ \trn{B}$ corresponding  to the
atoms of $\lat $ we get a reduced representation of $\M$ by the
boolean matrix:
$$ {A}(\M) \ds = \begin{array}{l|cccc cc}
        &  F_{1} & F_{2} & F_{3} & F_{4} & F_{5} &
               \\\hline
     \emptyset & 1 & 1 &1 &1 &1 \\
     F_{14} & 0 &1  &1 & 0 & 1    \\
     F_{15} & 0 &1  &1 & 1 & 0  \\
     F_{24} & 1 & 0  &1 & 0 & 1  \\
     F_{25} & 1 & 0  &1 & 1 & 0  \\
     F_{123} & 0 & 0  &0 & 1 & 1   \\
     F_{345} & 1 &1  &0  & 0 & 0 \\
     \end{array} $$

\end{example}

\begin{example}
The matroid $K_4$ over the 6 point set $E:= \{1,2,3,4,5,6 \}$,
whose bases are all the 3-subset except $\{1,2,4 \}$, $\{1,3,5
\}$, $\{3,4,6 \}$, and $\{2,5,6\}$ corresponds to the diagram:
$$ \footnotesize \xy %(17,0)*+{2},
(5,18)*+{M(K_4):}, (18,10)*+{1},(62,-2)*+{3},(62,22)*+{2},
(40,2)*+{5}, (40,18)*+{4}, (50,10)*+{6},
(20,10)*+{\bullet},(60,0)*+{\bullet},(60,20)*+{\bullet},
(40,5)*+{\bullet}, (40,15)*+{\bullet}, (47,9)*+{\bullet},
(19,10)*+{}; (61,20)*+{}; **\crv{}, (19,10)*+{}; (61,-1)*+{};
**\crv{},(39,15)*+{}; (61,-1)*+{}; **\crv{}, (39,4)*+{};
(61,21)*+{}; **\crv{},
%
%,(29,13)*+{}; (51,13)*+{};  **\crv{(40,-13)}
\endxy$$

The matroid flat-lattice $\lat  : = \Lat{K_4}$ of $K_4$  is given
by the following diagram:

$$ \footnotesize   \xymatrix{
  & & & &  \ar@{-}[dlll] \ar@{-}[dll] \ar@{-}[dl]  E
    \ar@{-}[d] \ar@{-}[dr]  \ar@{-}[drr] \ar@{-}[drrr]  &  & \\
  &   \{1,2,4 \}  & \{2,3 \}  & \{1,3,5 \}  & \{ 1,6\}
 & \{2, 5,6\} & \{ 4,5 \} & \{ 3,4,6\} &  \\
  & \ar@{-}[u] \{ 1 \}  \ar@{-}[urr] \ar@{-}[urrr] &
\ar@{-}[ul] \ar@{-}[u]   \{ 2\}  \ar@{-}[urrr] & \ar@{-}[ul]
\ar@{-}[u] \{3\}
 \ar@{-}[urrrr]  & &
 \ar@{-}[ullll]   \{ 4 \} \ar@{-}[ur] \ar@{-}[urr] & \ar@{-}[ulll] \ar@{-}[ul] \{ 5\}
 \ar@{-}[u]  &   \ar@{-}[ull] \ar@{-}[ul]  \{6  \}  \ar@{-}[u]  \\
  &  & & & \ar@{-}[ulll] \ar@{-}[ull] \ar@{-}[ul] \emptyset\ar@{-}[ur]  \ar@{-}[urr] \ar@{-}[urrr] &  &
}$$ \vskip 3mm

 The representation $\cmp{A} := \cmp{A}(K_4 )$ of $\lat  $ is obtained by
 the following $15 \times 15$ boolean
matrix:
$$  \begin{array}{l|| cccc ccc|c cccc ccccccc}
        \not\leq & \emptyset  & F_1 & F_2 & F_3 & F_4  & F_5 & F_6 &
         F_{16} & F_{23} & F_{45} & F_{124} & F_{135} & F_{256} & F_{346} &
         E
        \\\hline \hline
       \emptyset   & 0 & 0 & 0 & 0 & 0 & 0 & 0 & 0 & 0 & 0 & 0 & 0 &
       0 & 0 & 0
       \\ \hline
        F_1 &  1 & 0 & 1 & 1 & 1 & 1 & 1    & 0 & 1 & 1 & 0 & 0 & 1 & 1 & 0 \\
        F_2 &  1 & 1 & 0 & 1 & 1 & 1 & 1    & 1 & 0 & 1 & 0 & 1 & 0 & 1 & 0 \\
        F_3 &  1 & 1 & 1 & 0 & 1 & 1 & 1    & 1 & 0 & 1 & 1 & 0 & 1 & 0 & 0 \\
        F_4 &  1 & 1 & 1 & 1 & 0 & 1 & 1    & 1 & 1 & 0 & 0 & 1 & 1 & 0 & 0 \\
      F_{5} &  1 & 1 & 1 & 1 & 1 & 0 & 1    & 1 & 1 & 0 & 1 & 0 & 0 & 1 & 0 \\
      F_{6} &  1 & 1 & 1 & 1 & 1 & 1 & 0    & 0 & 1 & 1 & 1 & 1 & 0 & 0 & 0
     \\ \hline
     F_{16} &  1 & 1 & 1 & 1 & 1 & 1 & 1  & 0 & 1 & 1 & 1 & 1 & 1 & 1 &  0  \\
     F_{23} &  1 & 1 & 1 & 1 & 1 & 1 & 1  & 1 & 0 & 1 & 1 & 1 & 1 & 1 &  0  \\
     F_{45} &  1 & 1 & 1 & 1 & 1 & 1 & 1  & 1 & 1 & 0 & 1 & 1 & 1 & 1 &  0  \\
     F_{124} &  1 & 1 & 1 & 1 & 1 & 1 & 1  & 1 & 1 & 1 & 0 & 1 & 1 & 1 &  0  \\
     F_{135} &  1 & 1 & 1 & 1 & 1 & 1 & 1  & 1 & 1 & 1 & 1 & 0 & 1 & 1 &  0  \\
     F_{256} &  1 & 1 & 1 & 1 & 1 & 1 & 1  & 1 & 1 & 1 & 1 & 1 & 0 & 1 &  0  \\
     F_{346} & 1 & 1 & 1 & 1 & 1 & 1 & 1  & 1 & 1 & 1 & 1 & 1 & 1 & 0 &  0  \\
     E &        1 & 1 & 1 & 1 & 1 & 1 & 1  & 1 & 1 & 1 & 1 & 1 & 1 & 1 &  0  \\
     \end{array} $$

Taking the transpose of the restriction $B := \cmp{A}|_{\Atom{\lat
}}$ of $\cmp{A}$ to the rows corresponding to the  atoms of $\lat
$, and leaving the rows of $ \trn{B}$ corresponding flats of
cardinality $\geq 2$, we get the following boolean representation
of $K_4$:
$$ {A}(K_4) \ds = \begin{array}{l|cccc ccc}
        &  F_{1} & F_{2} & F_{3} & F_{4} & F_{5} &  F_6 &
               \\\hline
     \emptyset & 1 & 1 & 1 & 1 & 1 & 1  \\
    F_{16} &       0 & 1 & 1 & 1 & 1 & 0 \\
    F_{23} &       1 & 0 & 0 & 1 & 1 & 1\\
    F_{45} &       1 & 1 & 1 & 0 & 0 & 1  \\
    F_{124}&       0 & 0 & 1 & 0 & 1 & 1   \\
    F_{135}&       1 & 1 & 0 & 1 & 0 & 1   \\
    F_{256}&       1 & 0 & 1 & 1 & 0 & 0  \\
    F_{346} &      1 & 1 & 0 & 0 & 1 & 0   \\
      \end{array} $$
\end{example}

\begin{example}
The matroid $\tW^3$ over the 6 point set $E:= \{1,2,3,4,5,6 \}$,
whose bases are all the 3-subset except $\{1,2,4 \}$, $\{1,3,5
\}$, and $\{2,3,6 \}$, has the diagram:
$$ \footnotesize \xy  (10,15)*+{\tW^3:}, (22,0)*+{2},(58,0)*+{3},
 (40,-3)*+{6},(50,9)*+{5},
(30,9)*+{4}, (40,18)*+{1},
(25,0)*+{\bullet},(55,0)*+{\bullet},
 (40,0)*+{\bullet},(48,7)*+{\bullet},
(32,7)*+{\bullet}, (40,15)*+{\bullet},
(24,-1)*+{}; (41,16)*+{}; **\crv{}, (25,0)*+{}; (55,0)*+{};
**\crv{},(39,16)*+{}; (56,-1)*+{}; **\crv{},
%
%,(29,13)*+{}; (51,13)*+{};  **\crv{(40,-13)}
\endxy$$

The matroid flat-lattice of $\lat  : = \Lat{\tW^3}$  is as follows
$$ \footnotesize   \xymatrix{
  & & & & \ar@{-}[dllll] \ar@{-}[dlll] \ar@{-}[dll] \ar@{-}[dl]  E
    \ar@{-}[d] \ar@{-}[dr]  \ar@{-}[drr] \ar@{-}[drrr]  \ar@{-}[drrrr]&  & \\
 \{ 1,6\}  &   \{3,4 \}  & \{2,5 \}  & \{1,2,4 \}  & \{ 1,3,5 \}
 & \{ 2,3,6\} & \{ 4,5 \} & \{ 4,6\} & \{5,6 \} \\
  & \ar@/^1pc/@{-}[ul] \{ 1 \}  \ar@{-}[urr] \ar@{-}[urrr] &
\ar@{-}[u]   \{ 2\} \ar@{-}[ur] \ar@{-}[urrr] & \ar@{-}[ull] \{3\}
\ar@{-}[ur] \ar@{-}[urr] & &
 \ar@{-}[ullll] \ar@{-}[ull]  \{ 4 \} \ar@{-}[ur] \ar@{-}[urr] & \ar@{-}[ullll] \ar@{-}[ull] \{ 5\}
 \ar@{-}[u] \ar@{-}[urr] &  \ar@/^5pc/@{-}[ulllllll] \ar@{-}[ull] \{6  \}  \ar@{-}[u] \ar@{-}[ur] \\
  &  & & & \ar@{-}[ulll] \ar@{-}[ull] \ar@{-}[ul] \emptyset\ar@{-}[ur]  \ar@{-}[urr] \ar@{-}[urrr] &  &
}$$ \vskip 3mm

 The following $17 \times 17$ boolean
matrix provides the  representation $\cmp{A} := \cmp{A}(\tW^3 )$
of $\lat  := \Lat{\tW^3}$:
$$  \begin{array}{l|| cccc ccc|c cccc ccccccc}
        \not\leq & \emptyset  & F_1 & F_2 & F_3 & F_4  & F_5 & F_6 &
         F_{16} & F_{25} & F_{34} & F_{45} & F_{46} & F_{56} & F_{124} &
        F_{135} & F_{236}& E
        \\\hline \hline
       \emptyset   & 0 & 0 & 0 & 0 & 0 & 0 & 0 & 0 & 0 & 0 & 0 & 0 &
       0 & 0 & 0 & 0 & 0
       \\ \hline
        F_1 &  1 & 0 & 1 & 1 & 1 & 1 & 1    & 0 & 1 & 1 & 1 & 1 & 1 & 0 & 0 & 1 & 0 \\
        F_2 &  1 & 1 & 0 & 1 & 1 & 1 & 1    & 1 & 0 & 1 & 1 & 1 & 1 & 0 & 1 & 0 & 0 \\
        F_3 &  1 & 1 & 1 & 0 & 1 & 1 & 1    & 1 & 1 & 0 & 1 & 1 & 1 & 1 & 0 & 0 & 0 \\
        F_4 &  1 & 1 & 1 & 1 & 0 & 1 & 1    & 1 & 1 & 0 & 0 & 0 & 1 & 0 & 1 & 1 & 0 \\
      F_{5} &  1 & 1 & 1 & 1 & 1 & 0 & 1    & 1 & 0 & 1 & 0 & 1 & 0 & 1 & 0 & 1 & 0 \\
      F_{6} &  1 & 1 & 1 & 1 & 1 & 1 & 0    & 0 & 1 & 1 & 1 & 0 & 0 & 1 & 1 & 0 & 0
     \\ \hline
     F_{16} &  1 & 1 & 1 & 1 & 1 & 1 & 1  & 0 & 1 & 1 & 1 & 1 & 1 & 1 & 1 & 1 &  0  \\
     F_{25} &  1 & 1 & 1 & 1 & 1 & 1 & 1  & 1 & 0 & 1 & 1 & 1 & 1 & 1 & 1 & 1 &  0  \\
     F_{34} &  1 & 1 & 1 & 1 & 1 & 1 & 1  & 1 & 1 & 0 & 1 & 1 & 1 & 1 & 1 & 1 &  0  \\
     F_{45} &  1 & 1 & 1 & 1 & 1 & 1 & 1  & 1 & 1 & 1 & 0 & 1 & 1 & 1 & 1 & 1 &  0  \\
     F_{46} &  1 & 1 & 1 & 1 & 1 & 1 & 1  & 1 & 1 & 1 & 1 & 0 & 1 & 1 & 1 & 1 &  0  \\
     F_{56} &  1 & 1 & 1 & 1 & 1 & 1 & 1  & 1 & 1 & 1 & 1 & 1 & 0 & 1 & 1 & 1 &  0  \\
     F_{124} & 1 & 1 & 1 & 1 & 1 & 1 & 1  & 1 & 1 & 1 & 1 & 1 & 1 & 0 & 1 & 1 &  0  \\
     F_{135} &  1 & 1 & 1 & 1 & 1 & 1 & 1  & 1 & 1 & 1 & 1 & 1 & 1 & 1 & 0 & 1 &  0  \\
     F_{236} &  1 & 1 & 1 & 1 & 1 & 1 & 1  & 1 & 1 & 1 & 1 & 1 & 1 & 1 & 1 & 0 &  0  \\
     E &        1 & 1 & 1 & 1 & 1 & 1 & 1  & 1 & 1 & 1 & 1 & 1 & 1 & 1 & 1 & 1 &  0  \\
     \end{array} $$

The transpose of the restriction $B := \cmp{A}|_{\Atom{\lat }}$ of
$\cmp{A}$ to the rows corresponding to the  atoms of $\lat $ gives
us the following boolean representation of $\tW^3$:
$$ {A}(\tW^3) \ds = \begin{array}{l|cccc ccc}
        &  F_{1} & F_{2} & F_{3} & F_{4} & F_{5} &  F_6 &
               \\\hline
     \emptyset & 1 & 1 & 1 & 1 & 1 & 1  \\
     F_{16} &       0 & 1 & 1 & 1 & 1 & 0 \\
     F_{25} &       1 & 0 & 1 & 1 & 0 & 1\\
     F_{34} &       1 & 1 & 0 & 0 & 1 & 1  \\
     F_{45} &       1 & 1 & 1 & 0 & 0 & 1   \\
     F_{46} &       1 & 1 & 1 & 0 & 1 & 0   \\
     F_{56} &       1 & 1 & 1 & 1 & 0 & 0  \\
     F_{124} &      0 & 0 & 1 & 0 & 1 & 1   \\
     F_{135} &      0 & 1 & 0 & 1 & 0 & 1    \\
     F_{236} &      1 & 0 & 0 & 1 & 1 & 0    \\
      \end{array} $$
(Note that this representation can be reduced further by omitting
duplicate rows.)
\end{example}

\subsection{An upper  bound on the representation size}\label{ssec:bound} Using the
new representation of matroids, assisted by Corollary
\ref{cor:rankNook}, we compute an upper bound for the size of the
boolean matroids, i.e., the height of the representing boolean
matrix. Let $A(\M) :=  \cmp{A}|_{\Atom{\lat }}$, $\lat :=
\Lat{\M}$, be a boolean representation of the matroid $\M := (E,
\tH)$, as obtained from Theorem \ref{thm:matRep}. Suppose $\M$ has
 rank $k$ and $A(\M)$ is an $m \times n$ matrix, i.e., $|E| = n$.
Then, we have the following naive upper bound:
$$ m  \ds \leq \sum_{i = 0} ^{k} \chos{n}{i}.$$

Of course, a better upper bound on the size is the number of sji
(strict join irreducibles, see  \cite[\S6]{qtheory} and
\cite{IJdim}, also see \cite{IJdim}).

%\begin{remark} Among all the $n$-element matroids of rank
%$k$, the uniform matroid $U_{k,n}$ has the largest boolean
%representation.
%\end{remark}

\subsection{Tropical representations}\label{ssec:tropRep}
In \cite[Apendix A]{IJmat} we have shown that the boolean semiring
$\bool := (\{ 0,1\}, + , \cdot \;)$ embeds naturally in the
tropical semiring $\Real_{(\max, + )} := (\Real \cup \{ -\infty\},
\max, +)$, or dually in $\Real_{(\min, + )} := (\Real \cup \{
\infty\}, \min, +)$, and much more generally it embeds in any
idempotent semiring $S := (S,+, \cdot \;)$ by sending $1 \mapsto
\one_S$ and $0 \mapsto \zero_S$,  the multiplicative unit and the
zero of $S$, respectively. In particular, for the tropical
semiring $S = \Real_{(\max, + )}$, the embedding $\vrp : \bool
\hookrightarrow \Real_{(\max, + )}$ is given by $\vrp : 1 \mapsto
0$ and $\vrp: 0 \mapsto -\infty$.

Having this embedding $\vrp: \bool \hookrightarrow S$, we easily
generalize the result of Theorem \ref{cor:boolRep}:
\begin{corollary}\label{cor:tropRep}
Every matroid is representable over any idempotent semiring, and
in particular each matroid is tropically representable.
\end{corollary}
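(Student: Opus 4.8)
The plan is to transport the boolean representation produced by Theorem~\ref{cor:boolRep} across the embedding $\vrp$ and to check that independence survives the passage. Fix a matroid $\M := (E,\tH)$ and let $A$ be a boolean representation of $\M$, so that $\H(A) \cong \M$. By the witness criterion of Corollary~\ref{cor:witnes}, a column subset $Y \subseteq \Cl(A)$ is independent exactly when $\cl{A}{Y}$ contains a square nonsingular submatrix. Applying $\vrp$ entrywise yields a matrix $\vrp(A)$ over $S$; since deleting rows and columns commutes with the entrywise action of $\vrp$, it suffices to show that $\vrp$ preserves both nonsingularity and singularity of square submatrices. Independence of $Y$ for $A$ will then coincide with independence of $Y$ for $\vrp(A)$, giving $\H(\vrp(A)) = \H(A) \cong \M$.

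To carry out the transfer cleanly I would extend $\vrp$ to the supertropical cover $\widehat S$ of $S$, sending the tangible unit $1 \mapsto \one_S$, the ghost $\1 \mapsto \one_S^\nu$, and $0 \mapsto \zero_S$; this is a homomorphism of supertropical semirings precisely because $S$ is idempotent, so that $\one_S + \one_S = \one_S^\nu$. Since the permanent~\eqref{eq:det1} is a formal polynomial in the matrix entries, it commutes with any such homomorphism, giving $\per{\vrp(B)} = \vrp(\per{B})$ for every square submatrix $B$. Consequently, if $B$ is nonsingular, i.e.\ $\per{B} = 1$, then $\per{\vrp(B)} = \one_S$ and $\vrp(B)$ is nonsingular; whereas if $B$ is singular, i.e.\ $\per{B} \in \{0, \1\}$, then $\per{\vrp(B)} \in \{\zero_S, \one_S^\nu\}$ and $\vrp(B)$ is singular. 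As a concrete check of the first implication, Lemma~\ref{lem:2.1.f} puts $B$ in the triangular form~\eqref{eq:trgform}, whose image under $\vrp$ has $\one_S$ on the diagonal and $\zero_S$ above it, so every non-identity permutation in~\eqref{eq:det1} contributes a factor $\zero_S$ and only the diagonal term $\one_S$ survives.

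The step I expect to be the main obstacle is exactly this bookkeeping of the ghost layer: the boolean value $\1$ records the coincidence of two maximal permutations, and it must be carried to a genuine ghost $\one_S^\nu$ rather than be allowed to collapse onto $\one_S$, which would falsely turn singular submatrices into nonsingular ones; this is why one works over $\widehat S$ rather than over $S$ itself, and why the witness criterion of Corollary~\ref{cor:witnes} must be invoked in the form valid over an arbitrary supertropical semiring \cite{IzhakianTropicalRank}. Granting this, $\vrp(A)$ represents $\M$ over $\widehat S$. Specializing $S = \Real_{(\max,+)}$ yields the tropical representation asserted in the statement, and the same argument applies verbatim to any idempotent $S$.
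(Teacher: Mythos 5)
Your proposal is correct and takes essentially the same route as the paper: the paper's entire argument for Corollary~\ref{cor:tropRep} is to push the boolean representation of Theorem~\ref{cor:boolRep} entrywise through the embedding $\vrp : \bool \hookrightarrow S$, deferring the verification that independence survives this transport to \cite[Appendix A]{IJmat}. Your ghost-layer bookkeeping --- extending $\vrp$ to the supertropical cover of $S$ and checking that the permanent, hence nonsingularity and singularity of witnesses in the sense of Corollary~\ref{cor:witnes}, commutes with the map --- is precisely the content the paper leaves to that reference, so you have simply made its one-line proof explicit.
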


%******************************* Reference *********************************

\end{document}